\title[Topologically 2-generated groups]{Topologically 2-generated groups}
\theoremstyle{plain}
\newtheorem{theorem}{Theorem}[section]
\newtheorem{lemma}[theorem]{Lemma}
\newtheorem{corollary}[theorem]{\bf Corollary}
\newtheorem{remark}[theorem]{\bf Remark}
\newtheorem{proposition}[theorem]{\bf Proposition}
\newtheorem{question}[theorem]{\bf Question}
\newcommand \Aut{ \mbox{Aut}}
\newcommand \Id{ \mbox{Id}}
\newcommand \SI{ S_\infty}
\newcommand \dom{{\rm{ dom}}}
\newcommand \rng{{\rm{ rng}}}
\newcommand \age{{\rm{ Age}}}
\newcommand \aut{{\rm{ Aut}}}
\newcommand \Iso{{\rm{ Iso}}}
\def\f{\mathcal{F}}
\def\rest{\restriction}
\newcommand \NN{\mathbb{N}}
\newcommand \QQ{\mathbb{Q}}
\newcommand \Ury{\mathbb{U}}
\newcommand \UU{\mathbb{U}_0}
\newcommand \ZZ{\mathbb{Z}}
\newcommand \SSS{\mathbb{S}}
\newcommand{\fra}{Fra\"\i ss\'e }
\author[A. Kwiatkowska]{Aleksandra Kwiatkowska}
\address{Institut f\"{u}r Mathematische Logik und Grundlagenforschung, Universit\"{a}t  M\"{u}nster,  
Einsteinstrasse 62,
48149  M\"{u}nster,
Germany {\bf{and}} 
Instytut Matematyczny, Uniwersytet Wroc{\l}awski,  pl. Grunwaldzki 2/4, 50-384 Wroc{\l}aw, Poland}
\email{kwiatkoa@uni-muenster.de}
\author[M. Malicki]{Maciej Malicki}
\address{Department of Mathematics and Mathematical Economics, Warsaw School of Economics, al. Niepodleg{\l}o\'sci 162, 02-554, Warsaw, Poland}
\email{mamalicki@gmail.com}
\thanks{The first named author was supported by Narodowe Centrum Nauki grant 2016/23/D/ST1/01097.}
\keywords{automorphism groups, $L_0(G)$ groups, topological generators}
\subjclass[2010]{03E15, 54H11}
\begin{document}

\begin{abstract}
We  prove that for a number of ultrahomogeneous structures $M$, including those with the free amalgamation 
property, the powers of the automorphism group $\Aut(M)^n$, $n=1,2,\ldots$,
and the group $L_0(\Aut(M))$ of measurable functions with values in $\Aut(M)$, have  a cyclically dense conjugacy class, in particular, are topologically 2-generated. This provides a number of new examples of groups with this property. Moreover, we will show that each of these groups is cyclically generated by a pair  generating the free group.
\end{abstract}

\maketitle

\section{Introduction}

For a Polish group $G$,  a tuple $(a_1,\ldots, a_n)$ in $G$  {\em topologically generates}
 $G$ if the closure of the subgroup of $G$ generated by it, is equal to $G$. In this situation, we say that $G$ is {\em topologically $n$-generated}.
There is a number of Polish groups which are topologically 2-generated in a very special way, specifically, which have a cyclically dense conjugacy class.
For a Polish group $G$ say that $(g,h)$ {\em (topologically) cyclically generates} $G$ if $\{g^l h g^{-l}\colon l\in~\ZZ\}$ is dense in $G$. 
In that case, we will say that $G$ has a  {\em cyclically dense  conjugacy class}.

Kechris and Rosendal (\cite{KR}, pages 314--315) gave  examples
of automorphism groups of countable structures which have a cyclically dense conjugacy class, among them was the automorphism group of the countable atomless
Boolean algebra. Macpherson \cite[Proposition 3.3]{Ma} proved that the automorphism group of the random graph has 
a cyclically dense conjugacy class,
and Solecki \cite[Corollary 4.1]{S} proved that the automorphism group of the rational Urysohn metric space 
has this property. It follows from the work of Glass, McCleary and Rubin \cite[Proposition 4.1]{GMR}
that the automorphism group of the random poset also has a cyclically dense conjugacy class.
Kaplan and Simon \cite{KS}, independently from our work, recently formulated   a model-theoretic condition 
on automorphism groups of countable structures to have a cyclically dense conjugacy class. 

A similar line of research is presented in Darji and Mitchell \cite{DM}, where the sets of topological 2-generators in the automorphism group of rationals, and in the automorphism group of the colored random graph, are studied.
It is shown there, in particular, that in each of these groups, for every non-identity element $f$, there is an element $g$ such that $(f,g)$ topologically 2-generates the whole group (see \cite[Theorems 1.3 and Corollary 1.8]{DM}.)

Groups of measurable functions (for the definition see the next section) were introduced by Hartman and Mycielski \cite{HM} and used to show
that every topological group can be embedded in a path connected one. In recent years, the structure 
and dynamics of those groups has been frequently studied, for example in \cite{FS, KM, PS, Sa, So}.
It seems that topological generators of groups of measurable functions have been  barely studied at all. Glasner~\cite{Gla} (see also Pestov \cite[Theorem 4.2.6]{P}) showed that $L_0(\SSS^1)$  is topologically 1-generated.

\section{Definitions and Results}

A Polish group is \emph{non-archimedean} if it admits a neighbourhood basis at the identity consisting of open subgroups.
It is well known that  non-archimedean Polish groups are exactly those that can be realized as automorphism groups of countable structures,
equipped with the pointwise convergence topology. Even more, non-archimedean Polish groups are exactly those that can be realized as automorphism groups of relational ultrahomogeneous countable structures (see \cite{BK}, pages 9-10),
where a structure $M$ is {\em ultrahomogeneous} if every automorphism between finite substructures of $M$ can be extended to an automorphism of the whole $M$. Therefore, without loss of generality, we will restrict our attention to automorphism groups of ultrahomogeneous structures.
If a countable and locally finite structure $M$ is ultrahomogeneous, then $\f=\age(M)$ -- the class  of all finite substructures embeddable in $M$ -- is a \fra class, i.e., it is a countable up to isomorphism class of finite structures which
 has the hereditary property HP (for every $A\in\f$, if $B$ is a substructure of  $A$, then $B\in\f$),
 the joint embedding property JEP (for any $A,B\in\f$ there is $C\in \f$ which embeds both $A$ and $B$),
 and the amalgamation property AP (for any $A,B_1,B_2\in\f$ and any embeddings $\phi_i\colon A\to B_i$, $i=1,2$,
 there are $C\in\f$ and embeddings $\psi_i\colon B_i\to C$, $i=1,2$, such that $\psi_1\circ \phi_1=\psi_2\circ \phi_2$).
Moreover, by the classical theorem due to Fra\"{i}ss\'{e}, for every \fra class  $\mathcal{F}$ 
of finite structures, there is a unique up to isomorphism countable ultrahomogeneous structure $M$ such that  $\mathcal{F}=\age(M)$. In that case, we call $M$ the {\em Fra\"{i}ss\'{e} limit} of $\mathcal{F}$, see \cite[Section 7.1]{Ho}. Because $\age(M)$ has AP, $M$ has  {\em the extension property}, that is, for any $X,Y\in\age(M)$, embeddings $i\colon X\to M$ and $\phi\colon X\to Y$, there is an embedding $j\colon Y\to M$ such that $ j\circ\phi=i$.

By a \emph{partial automorphism} of a structure $M$, we mean a mapping $p\colon A \rightarrow B$, where $A, B \subseteq M$ are finite, that can be extended to an automorphism of $M$. By $\aut(M)$, we denote the group of all automorphisms of $M$, equipped with product topology. For $X \subseteq M$, let $\aut_X(M)$ denote the subgroup of $\aut(M)$ that pointwise stabilizes $X$. We say that a structure $M$ has  {\em no algebraicity} if for any finite substructures $X, Y\subseteq M$, $X\cap Y=\emptyset$, the orbit $\aut_X(M). Y$ is infinite.
It is well known (see for example \cite[Theorem 7.1.8]{Ho}) that an  ultrahomogeneous structure $M$ has no algebraicity  if and only if  $\age(M)$ satisfies the \emph{strong amalgamation property}, i.e., we have additionally $\psi_1[B_1] \cap \psi_2[B_2]=\psi_1 \circ \phi_1[A]$ in the definition of the amalgamation property.
We say that $g\in\aut(M)$ has {\em no cycles} if for every $x\in M$ and $n\geq 1$, it holds that $g^n(x)\neq x$.

Let  $(X,\mu)$ be a standard non-atomic Lebesgue space (without loss of generality, $X$ is the interval [0,1] and $\mu$ is the Lebesgue measure), and let $Y$ be a Polish space. We  define 
$L_0(X,\mu; Y)$ to be the set of all ($\mu$-equivalence classes of) measurable (equivalently: Borel) functions from $X$ to $Y$. We equip this space with the convergence of measure topology. The neighborhood basis at $h\in L_0(X,\mu; Y)$ is given by sets of the form
\[[h,\delta,\epsilon]=\{ g\in L_0(X,\mu; Y) \colon \mu(\{x\in X \colon d(g(x),h(x))<\delta\})>1-\epsilon \}, \]
where $d$ is a fixed compatible complete metric on $Y$, and $\epsilon, \delta>0$.  We  equip $L_0(X,\mu; Y)$  with the metric
\[ \rho(f,g)=\inf\{\epsilon>0\colon \mu(\{ x\in X\colon d(g(x),f(x))>\epsilon\})<\epsilon \},\]

When $Y=G$ is a Polish group, then $ L_0(X,\mu; G)$ is a Polish group as well, with multiplication given by the pointwise multiplication in $G$: $(fg)(x)=f(x)g(x)$.  
For $g\in G$, we will denote by $f_g$ the constant function in  $L_0(G)$ with the value $g$. For notational reasons, we assume that the natural numbers $\NN$ start with $1$.

Our first result is the following theorem, which will provide a number of examples of topological groups that have a cyclically dense conjugacy class, in particular, that are topologically 2-generated.

\begin{theorem}\label{use}
Let $G$ be the automorphism group of an ultrahomogeneous structure $M$. Suppose that there exists $g\in G$ such that for any partial automorphisms $\phi_i\colon A_i\to B_i$ and $\psi_i\colon C_i\to D_i$, $i \leq n$, of $M$, there are $k, m \in\ZZ$ such that $g^{k}\phi_i g^{-k} \cup g^{m}\psi_i g^{-m}$ can be extended to a single automorphism of $M$, for each $i \leq n$.
Then each of the groups  $G^n$, $n \in \NN$, is cyclically generated by $((g,\ldots, g), \bar{b})$ for some $\bar{b} \in G^n$, and $L_0(G)$ is cyclically generated by $(f_g,b)$ for some $b\in L_0(G)$.

Moreover, if $M$ has no algebraicity and $g$ has no cycles, we can find such  $\bar{b} \in G^n$ 
($b\in L_0(G)$, respectively) such that $(g,\ldots, g)$ and $\bar{b}$ ($f_g$ and $b$, respectively)  generate the free group.
\end{theorem}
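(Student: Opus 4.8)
The plan is to produce $\bar b$ (respectively $b$) by a Baire category argument in $G^n$ (respectively $L_0(G)$), after some standard reductions. Since $G$ is non-archimedean, the sets $\{h\in G:\phi\subseteq h\}$ with $\phi$ a finite partial automorphism of $M$ form a clopen basis of $G$; hence the sets $\prod_{i\le n}\{h_i:\psi_i\subseteq h_i\}$ form a basis of $G^n$, and, since the simple functions that are constant on a partition of $[0,1]$ into finitely many dyadic intervals are dense in $L_0(G)$, the sets $[h,\delta,\epsilon]$ with $h$ such a function form a basis of $L_0(G)$. For such an $h$, which takes finitely many values, there is a finite $F\subseteq M$ such that agreement on $F$ implies $\delta$-closeness for all the values of $h$; so membership of $c$ in $[h,\delta,\epsilon]$ is guaranteed by the combinatorial condition that $c(x)$ agrees with $h(x)$ on $F$ for all (or a.e.) $x$.

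The key point is the following reformulation of the hypothesis. Given finite partial automorphisms $\phi_i,\psi_i$ for $i\le n$, choose $k,m\in\ZZ$ as in the hypothesis and set $l:=k-m$. Then for each $i$ the partial automorphism $g^k\phi_ig^{-k}\cup g^m\psi_ig^{-m}$ extends to some $\sigma_i\in G$, and consequently $\phi_i\cup g^{-l}\psi_ig^{l}$ is a finite partial automorphism (being contained in $g^{-k}\sigma_ig^{k}$) with the property that any $b_i\in G$ extending it satisfies $g^{l}b_ig^{-l}\supseteq\psi_i$. Apply this to a basic open $U=\prod_{i\le n}\{h_i:\psi_i\subseteq h_i\}$ of $G^n$ and to any nonempty basic open $V=\prod_{i\le n}\{h_i:\phi_i\subseteq h_i\}$: every $\bar b=(b_1,\dots,b_n)$ with $b_i\supseteq\phi_i\cup g^{-l}\psi_ig^{l}$ lies in $V$ and satisfies $(g,\dots,g)^{l}\,\bar b\,(g,\dots,g)^{-l}\in U$, and such $\bar b$ exist by ultrahomogeneity. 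Hence $W_U:=\{\bar b:\exists l\in\ZZ\ (g,\dots,g)^{l}\bar b(g,\dots,g)^{-l}\in U\}$ is open and dense, and the intersection of the $W_U$ over a countable basis is a dense $G_\delta$ set of $\bar b$ cyclically generating $G^n$ together with $(g,\dots,g)$. What makes this work is that the hypothesis supplies one pair $(k,m)$, hence one $l$, simultaneously for all $n$ coordinates. The argument for $L_0(G)$ is identical once the coordinates are replaced by the (finitely many) pieces of a common refinement of the dyadic partitions underlying $h$ and the target simple function, and the hypothesis is applied to the corresponding family of pairs: one builds $b$ piecewise so that $b(x)$ agrees with $h(x)$ on $F$ everywhere and $g^{l}b(x)g^{-l}$ agrees with the target everywhere, which gives density of the corresponding $W_U\subseteq L_0(G)$.

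For the \emph{moreover} clause, by another appeal to Baire category it suffices to prove that, for every nontrivial reduced word $w$ in two letters, $D_w:=\{\bar b\in G^n:w((g,\dots,g),\bar b)\ne\Id\}$ — equivalently $\{\bar b: w(g,b_1)\ne\Id\}$ — is open and dense, and likewise that $\{b\in L_0(G):\mu(\{x:w(g,b(x))\ne\Id\})>0\}$ is open and dense; intersecting with the $G_\delta$ above keeps it comeager, so a suitable $\bar b$ (respectively $b$) exists. Openness is clear. Density reduces in both cases to the statement that every finite partial automorphism $\phi$ of $M$ extends to a finite partial automorphism $\phi'$ such that $w(g,b)\ne\Id$ for every $b\in G$ extending $\phi'$. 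To see this one picks a point $x\in M$ and evaluates $w(g,b)$ at $x$ letter by letter from the right, recording the successive points $x=q_0,q_1,\dots,q_N$; the $g^{\pm1}$-letters are evaluated with the given total automorphism $g$, and whenever a $b^{\pm1}$-letter is applied to a point outside the current domain (respectively range) one extends by sending it to a \emph{fresh} image, chosen to avoid a prescribed finite set of forbidden values and all points seen so far. No algebraicity — i.e.\ the strong amalgamation property of $\age(M)$ — ensures that infinitely many such images are available at each step, and the absence of cycles for $g$ ensures that within each maximal $g$-run the points are distinct, so that with the forbidden sets chosen appropriately (also steering $q_0=x$ and each freshly chosen $b$-image away from the finitely many preimages, under the relevant powers of $g$, of the already-used points and of $\dom(\phi)\cup\rng(\phi)$) all of $q_0,\dots,q_N$ are distinct; in particular $w(g,b)(x)=q_N\ne q_0=x$.

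I expect the main obstacle to be precisely this last reduction: carrying out the step-by-step evaluation of an arbitrary reduced word while guaranteeing, through a finite-dodging bookkeeping, that the trajectory never enters the part of $b$ already determined by $\phi$ and never revisits a point, and checking that only finitely many constraints are imposed so that $\phi'$ is indeed finite. Everything else — the reductions of the first paragraph, and the density of the sets $W_U$ once the identity $l=k-m$ has been extracted from the hypothesis — is routine.
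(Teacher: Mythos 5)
Your proposal is correct and follows essentially the same route as the paper: extracting the single exponent $l=k-m$ from the hypothesis to show that the sets $W_U$ (the paper's $Z_\phi$ in Lemma \ref{gener}) are open and dense, and handling the free-group clause exactly as in Lemma \ref{free}, via open density of $\{b\colon w(g,b)\neq \Id\}$ from no algebraicity and no cycles (your generic word-evaluation bookkeeping fills in details the paper leaves implicit). The only cosmetic difference is that you prove density directly in $L_0(G)$ using step functions on a common dyadic refinement, whereas the paper (Lemma \ref{gen}) first establishes comeagerness in each $G^{2^n}$ and then transfers to $L_0(G)$ via the same step-function identification.
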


Let $\mathcal{F}$ be a class of finite structures in a relational language $L$. We say that $\mathcal{F}$  satisfies 
{\em the free  amalgamation property} if for every $A,B_1,B_2 \in \mathcal{F}$, and embeddings $\phi_i\colon A\to B_i$, $i \leq 2$, there is $C\in \mathcal{F}$ and there are embeddings $\psi_i\colon B_i\to C$, $i \leq 2$, such that we have:  

\begin{enumerate}
\item $\psi_1\circ\phi_1=\psi_2\circ\phi_2$,
\item $x\neq y$ for all $x\in S$ and $y\in T$,
\item for every relational symbol $R\in L$ of arity $n$, for every $n$-tuple $s=(s_1,\ldots, s_n)$ in~$C$,
if $R(s_1,\ldots,s_n)$, then  $s\subseteq \psi_1(B_1)$ or $s\subseteq \psi_2(B_2)$.
\end{enumerate}
Clearly, the free amalgamation property implies the strong amalgamation property.


The {\em random graph} (respectively, the {\em random triangle-free graph})  is the Fra\"{i}ss\'{e} limit of the class of all finite graphs (respectively, all finite triangle-free graphs). The {\em random tournament} is the  Fra\"{i}ss\'{e} limit of the class of all finite tournaments, where a tournament is a directed graph $D$ such that for any distinct $x,y\in D$ either there is an edge from $x$ to $y$ or there is an edge from $y$ to $x$, but not both.  
Furthermore, the {\em random poset} $\mathbb{P}$ is  the Fra\"{i}ss\'{e} limit of the class of all finite 
partially ordered sets, and the {\em rational Urysohn space} $\mathbb{U}_0$ is  the Fra\"{i}ss\'{e} limit of the class of all finite
metric spaces with rational valued distances.

\begin{theorem}\label{przyklady}
Suppose that: 
 \begin{itemize}
 \item[(i)] $M$ is a  countable relational ultrahomogeneous 
 structure  such that $\age(M)$ has the free amalgamation property or is $M$ is the random tournament, or
 \item[(ii)] $M$ is the rational Urysohn metric space, or
  \item[(iii)] $M$ is the random poset, or
  \item[(iv)] $M$ is  the rational numbers with ordering, or
    \item[(v)] $M$ is the countable atomless Boolean algebra, or the countable atomless Boolean algebra
    equipped with the dyadic measure.
 \end{itemize}

Then each $G^n$, $n \in \NN$, and $L_0(G)$, where $G={\rm Aut}(M)$, has a cyclically dense conjugacy class. In fact, each of these groups is cyclically generated by a pair  generating the free group.
\end{theorem}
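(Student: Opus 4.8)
My strategy is to reduce everything to Theorem \ref{use}: in each case I want to produce an automorphism $g$ of $M$ which has no cycles and which satisfies the ``two-parameter simultaneous conjugation'' hypothesis, namely that for any finitely many partial automorphisms $\phi_i\colon A_i\to B_i$ and $\psi_i\colon C_i\to D_i$ ($i\le n$) there are $k,m\in\ZZ$ with $g^k\phi_i g^{-k}\cup g^m\psi_i g^{-m}$ extendible to a single automorphism for every $i$; since in all the listed cases $M$ has no algebraicity (strong amalgamation), the ``moreover'' part of Theorem \ref{use} then yields a cyclically dense conjugacy class generated by a pair generating a free group, for every $G^n$ and for $L_0(G)$.

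The plan for case (i), the strong$^+$ amalgamation case, is the technical heart. I would build $g$ as an increasing union of finite partial automorphisms whose orbits are all infinite and pairwise ``generic'' in the following sense: enumerate $M=\{a_0,a_1,\dots\}$ and construct $g$ so that $M$ is partitioned into $g$-orbits $\{O_j\}_{j\in\ZZ}$, each a $\ZZ$-orbit (so $g$ has no cycles), with the orbits arranged so that, after applying a suitable power $g^k$, the domain/range of the $\phi_i$'s sits inside one ``block'' of orbits and, after applying $g^m$, the domain/range of the $\psi_i$'s sits inside a disjoint block; strong$^+$ amalgamation is exactly what lets me amalgamate the two translated families $\{g^k\phi_i g^{-k}\}$ and $\{g^m\psi_i g^{-m}\}$ over their common (empty, after translation) part while keeping the ``cross'' relations between the $S$-part and the $T$-part rigidly determined, so that the union is a well-defined partial automorphism, which by ultrahomogeneity extends to all of $M$. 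Concretely I would fix a ``reference copy'' of $\age(M)$ realized cofinally along the orbits and use a back-and-forth bookkeeping to guarantee that every configuration of $A_i,B_i,C_i,D_i$ can be placed; condition (3) (or (3')) is precisely the statement that the cross-relations are forced, which is what makes $p\cup q$ — here the union of the two translated systems — an isomorphism onto its image. The main obstacle is this construction: one must simultaneously ensure no cycles, infinitely many infinite orbits, and enough ``room'' so that after translating by $g^k$ resp.\ $g^m$ the two systems land in disjoint blocks with a prescribed (realizable, by strong$^+$ amalgamation) quantifier-free type on the union; getting the bookkeeping to handle \emph{all} finite systems of partial automorphisms at once, with $k$ and $m$ depending on the system, is where the care is needed.

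For cases (ii)--(v) the structures are not covered by (i) — the rational Urysohn space and the random poset fail free amalgamation, $(\QQ,<)$ and the atomless Boolean algebra are of a different flavor — so I would treat each by an ad hoc analogue of the same idea, exhibiting a concrete $g$ with no cycles whose orbits are arranged to allow the two-parameter amalgamation. For $\mathbb{U}_0$ I would take $g$ to be an isometry all of whose orbits are ``far apart'' and copy Solecki's argument (\cite{S}, Corollary 4.1) showing that metric amalgamation lets one glue two translated finite partial isometries once their supports are at large mutual distance; for the random poset $\mathbb{P}$ I would use the Glass--McCleary--Rubin-type analysis (\cite{GMR}, Proposition 4.1), placing translated blocks ``incomparably''; for $(\QQ,<)$ I would use an order-automorphism with all orbits cofinal-and-coinitial and interleave the two translated finite order-isomorphisms in disjoint intervals; for the atomless Boolean algebra (with or without the dyadic measure) I would work with its Stone space / the measure algebra and choose a measure-preserving $g$ whose powers spread any finite subalgebra across independent pieces, as in the Kechris--Rosendal examples (\cite{KR}, pp.\ 314--315). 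In each instance the verification that the chosen $g$ satisfies the hypothesis of Theorem \ref{use} is exactly the point, and then Theorem \ref{use} (with ``no algebraicity + no cycles'') delivers the cyclically dense conjugacy class with a free pair for all $G^n$ and for $L_0(G)$ simultaneously.
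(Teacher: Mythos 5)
Your overall strategy coincides with the paper's: every case is reduced to Theorem \ref{use} by exhibiting a single cycle-free $g$ for which all pairs of finite families of partial automorphisms can be simultaneously conjugated into a jointly extendible position. The genuine gap is in case (i), which you yourself call the technical heart: you assert that a back-and-forth ``bookkeeping'' will arrange that $g^k(\mathrm{supp}\,\phi_i)$ and $g^m(\mathrm{supp}\,\psi_i)$ land in a strong$^+$-amalgam position, but this is precisely the step that needs proof, because these translates are determined by $g$ and cannot simply be ``placed''; extending $g$ and positioning the translates must be done simultaneously. The paper handles this by isolating conditions $(\triangle_n)$, proving in Lemma \ref{sam} that strong$^+$ amalgamation implies them, and then in Theorem \ref{mmain} performing a two-stage extension at each odd step: first the current partial automorphism is extended (via the strong extension property) to an $h$ such that $h^m(E)$ is disjoint from $\dom(h)$ and $h^{-m}(F)$ is disjoint from $\rng(h)$, i.e.\ the two families are pushed to the free ends of the partial orbits, and only then is $(\triangle_n)$ applied to produce $\alpha$ so that $g_{2k+1}=h\cup\alpha$ is still a cycle-free partial automorphism and the conjugated families unite into partial automorphisms. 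None of this is recoverable from your sketch, and it is the content of the theorem in case (i).

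The case-specific mechanisms you propose for (ii) and (iii) are also not quite right. For the rational Urysohn space, ``supports at large mutual distance'' does not suffice: the union of two partial isometries with disjoint supports must preserve the cross-distances, so what is needed is that these distances are \emph{constant}; this is exactly what Solecki's Theorem (Theorem \ref{slawek}, \cite{S}) gives when the starting partial isometry has no cycles, namely all cross-distances equal $2\,\mathrm{diam}(A)$ (Remark \ref{acyc}), and the paper again builds $g$ by a back-and-forth applying this theorem at each stage. For the random poset, the Glass--McCleary--Rubin automorphism \cite{GMR} satisfies $x\le g(x)$ with a coterminal orbit, so the paper places one translated block entirely \emph{above} the other (every cross-pair comparable in the same direction), not ``incomparably''; the result you cite does not provide an automorphism with your incomparability property, which would require its own construction. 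Finally, your blanket claim that all listed structures have no algebraicity in the sense needed for the ``moreover'' clause of Theorem \ref{use} is at best delicate in case (v): the Boolean algebra is not relational, every automorphism fixes $0$ and $1$, and the Bernoulli shift has these as fixed points, so the no-algebraicity/no-cycles route to the free-pair refinement needs an adaptation there (a point the paper also treats tersely). For the main cyclic-density statement in cases (iv)--(v) your shift/independence mechanism matches the paper's proof of Theorem \ref{shift}.
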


A proof of Theorem \ref{przyklady} will be presented in Sections 3.2-3.5.
Each time we will verify that the assumptions of Theorem \ref{use} are satisfied.

It is easy to see that the class of graphs and the class of triangle-free graphs have the free amalgamation property, while the amalgamation in the class of finite tournaments is very canonical, but that class does not have the free amalgamation property. The classes of finite rational metric spaces, partial orderings, linear orderings, Boolean algebras, and Boolean algebras equipped with the dyadic measure, do not satisfy the free amalgamation property. Still, the conclusion of Theorem \ref{przyklady} holds for those structures.

For the case of the rational Urysohn space we use results of Solecki \cite{S}, and to deal with the random poset, we use the work of  Glass, McCleary and  Rubin \cite{GMR}. 

\section{Proofs}

\subsection{Proof of Theorem \ref{use}.}
For  $\bar{a}=(a^1, \ldots, a^{n})$,  an $n$-tuple in a Polish group $G$, let $f_{\bar{a}}$ be the
step function defined by
\[ f_{\bar{a}}(x)=a^k \mbox{ iff } x \in \left[ \frac{k-1}{n}, \frac{k}{n} \right]. \]
\begin{lemma}\label{gen}
Let $G$ be a Polish group and suppose that there is $g\in G$ such that for any $n\in\NN$ there is $\bar{h}\in G^n$ such that 
$((g,\ldots,g),\bar{h})$  cyclically generates  $G^n$. Then $L_0(G)$ is cyclically generated by $(f_g,b)$, for some $b\in L_0(G)$.
\end{lemma}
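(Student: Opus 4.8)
The plan is to produce $b$ by a Baire category argument inside the Polish group $L_0(G)$ rather than by an explicit construction. First I would record the standard fact that the step functions $f_{\bar a}$, $\bar a\in G^n$, $n\in\NN$, are dense in $L_0(G)$: an arbitrary Borel map $[0,1]\to G$ can be approximated in measure by a Borel map taking finitely many values whose level sets are finite unions of intervals with rational endpoints, and passing to a common denominator of those endpoints turns such a map into some $f_{\bar a}$. Hence the sets $[f_{\bar a},\delta,\epsilon]$ (with $\bar a\in\bigcup_n G^n$, $\delta,\epsilon>0$) form a basis for $L_0(G)$; fix a countable subfamily $\{V_m\}_{m\in\NN}$ of these which is still a basis. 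Since \emph{cyclically generates} means that the conjugacy orbit is dense, $(f_g,b)$ cyclically generates $L_0(G)$ exactly when $\{f_g^{l}bf_g^{-l}\colon l\in\ZZ\}$ meets every $V_m$, i.e.\ when $b\in\bigcap_m U_m$, where $U_m=\{b\in L_0(G)\colon\exists l\in\ZZ,\ f_g^{l}bf_g^{-l}\in V_m\}=\bigcup_{l\in\ZZ}f_g^{-l}V_m f_g^{l}$. As conjugation by a fixed element is a homeomorphism of $L_0(G)$, each $U_m$ is open, so by the Baire category theorem it suffices to prove that every $U_m$ is dense.

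The key point is the following statement about $G$ alone. For $Q\in\NN$, write $\sigma$ for conjugation by $(g,\dots,g)$ in $G^{Q}$; I claim that if $((g,\dots,g),\bar w)$ cyclically generates $G^{Q}$ for some $\bar w$, then
\[
D_Q=\{\bar d\in G^{Q}\colon \{\sigma^{l}(\bar d)\colon l\in\ZZ\}\ \text{is dense in}\ G^{Q}\}
\]
is a dense $G_\delta$ in $G^{Q}$. That $D_Q$ is $G_\delta$ is immediate: if $\{V'_m\}$ is a countable basis of $G^{Q}$ then $D_Q=\bigcap_m\bigcup_{l\in\ZZ}\sigma^{-l}(V'_m)$, and each $\sigma^{-l}(V'_m)$ is open. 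For density I would use an absorption argument: given a nonempty open $W\subseteq G^{Q}$, density of $\{\sigma^{l}(\bar w)\colon l\in\ZZ\}$ supplies $l_0$ with $\sigma^{l_0}(\bar w)\in W$, whence $\bigcup_{l\in\ZZ}\sigma^{l}(W)\supseteq\{\sigma^{l+l_0}(\bar w)\colon l\in\ZZ\}$ is dense in $G^{Q}$ and therefore meets $V'_m$; rewriting, $W\cap\bigcup_{l\in\ZZ}\sigma^{-l}(V'_m)\ne\emptyset$. Thus each $\bigcup_{l\in\ZZ}\sigma^{-l}(V'_m)$ is dense open, and the claim follows from the Baire category theorem in $G^{Q}$.

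Granting the claim, density of $U_m$ is routine. Write $V_m=[f_{\bar a},\delta,\epsilon]$ with $\bar a\in G^{N}$, and fix a basic neighbourhood $[f_{\bar c},\delta',\epsilon']$ with $\bar c\in G^{P}$. Choosing a common multiple $Q$ of $N$ and $P$ and refining the corresponding partitions, I may write $f_{\bar a}=f_{\bar a'}$ and $f_{\bar c}=f_{\bar c'}$ for suitable $\bar a',\bar c'\in G^{Q}$. The hypothesis of the lemma gives $\bar w\in G^{Q}$ with $((g,\dots,g),\bar w)$ cyclically generating $G^{Q}$, so by the claim there is $\bar d\in D_Q$ within distance $\delta'$ of $\bar c'$ in every coordinate. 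Then $b:=f_{\bar d}$ satisfies $d(b(x),f_{\bar c}(x))<\delta'$ for all $x$, hence $b\in[f_{\bar c},\delta',\epsilon']$; and since $\bar d\in D_Q$, some $\sigma^{l}(\bar d)$ is within distance $\delta$ of $\bar a'$ in every coordinate, and then $f_g^{l}bf_g^{-l}=f_{\sigma^{l}(\bar d)}\in[f_{\bar a},\delta,\epsilon]=V_m$. So $b\in U_m\cap[f_{\bar c},\delta',\epsilon']$; as the latter neighbourhood was arbitrary, $U_m$ is dense, and any $b\in\bigcap_m U_m$ is as required.

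I expect the density of $D_Q$ to be the only genuinely non-formal step, and it is also why a direct recursive construction of $b$ is unappealing: a single conjugate $f_g^{l}bf_g^{-l}$ is governed by one integer $l$ on all of $[0,1]$, so one cannot meet distinct requirements on disjoint sets of small measure. The Baire argument circumvents this because it shows that for each fixed requirement a comeager set of $b$ already works, the point being that $b$ may be perturbed on a set of full measure --- here, replacing the values $\bar c'$ of a step function by nearby values $\bar d$ whose $\sigma$-orbit is dense --- and the absorption trick is exactly what makes such $\bar d$ dense in $G^{Q}$.
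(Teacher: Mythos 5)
Your proposal is correct and follows essentially the same route as the paper: reduce to a Baire category argument in $L_0(G)$ over basic open sets centered at step functions, and obtain density of each $\bigcup_{l}f_g^{-l}V_mf_g^{l}$ from the density (indeed comeagerness) in $G^{Q}$ of tuples whose conjugation orbit under $(g,\dots,g)$ is dense, using $f_g^l f_{\bar d} f_g^{-l}=f_{\sigma^l(\bar d)}$. The only differences are cosmetic: you refine to a common multiple $Q$ where the paper uses dyadic levels $2^n$, and you spell out the absorption argument that the paper dismisses as "easy to see."
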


\begin{proof}

It is easy to see that for every Polish group $H$, if $(g,h)$ cyclically generates $H$, then the set
\[ A=\{h' \in H: (g,h') \mbox{ cyclically generates } H \} \]
is dense. As $A$ is  a $G_\delta$ set, it is in fact comeager. Thus, for a fixed $g \in G$ as in the statement of the lemma, and $n \in \NN$, every set
%
\[A_n=\{\bar{h}\in G^n\colon ((g,\ldots,g),\bar{h}) \text{ cyclically generates }  G^n\}\]
is comeager.

Regarding elements $\bar{h}$ of $A_n$ as step functions $f_{\bar{h}}$,  
 we identify $A_n$ with a subset of $L_0(G)$, which we will denote by $A'_n$.
We have that $A'_{2^1}\subseteq A'_{2^2}\subseteq \ldots $, and that $\bigcup_n A'_{2^n}$  is dense in $L_0(G)$.
This implies that for any non-empty open set $U$ in $L_0(G)$, the set
\[ \{h\in L_0(G)\colon \exists_{l\in\ZZ} \ f_g^l h f_g^{-l}\in U\} \]
is open and dense. Therefore the set
\[\{h\in L_0(G)\colon (f_g,h) \text{ cyclically generates } L_0(G) \}\]
is comeager, in particular non-empty.
\end{proof}

The lemma below is a  generalization to every dimension $n$ of a condition due to Kechris-Rosendal (\cite{KR}, page 314) on a non-archimedean Polish group to have a cyclically dense  conjugacy class.
 \begin{lemma}\label{gener}
Let $G$ be the automorphism group of an ultrahomogeneous structure $M,$ and 
let $n \in \NN$ and $\bar{g}=(g_1,\ldots, g_n)\in G^n$ be given. Suppose that for every choice of  partial automorphisms
$\phi_i\colon A_i\to B_i$ and  $\psi_i\colon C_i\to D_i$, $i \leq n$, of $M$
there are $k,m\in \ZZ$ such that for every  $i \leq n$, the partial automorphisms $g_i^k\psi_i g_i^{-k}$ and $g_i^m\phi_i g_i^{-m}$ have a common extension to an automorphism of $M$.
Then the set
 \[Y=  \{\bar{h} \in G^n\colon (\bar{g},\bar{h}) \text{ cyclically generates } G^n\} \]
 is comeager in $G^n$; in particular it is non-empty.
\end{lemma}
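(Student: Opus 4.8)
The plan is to use the Baire category theorem in $G^n$, exploiting that $G^n$ is a Polish group (in fact non-archimedean). For a non-empty basic open set $U \subseteq G^n$, consider the set
\[
Y_U = \{\bar{h} \in G^n : \exists l \in \ZZ \ \ \bar{g}^l\,\bar{h}\,\bar{g}^{-l} \in U\},
\]
where $\bar{g}^l$ denotes $(g_1^l,\ldots,g_n^l)$ and the conjugation is coordinatewise. Each $Y_U$ is plainly open (it is a union of translates/conjugates of $U$). If I can show each $Y_U$ is dense, then, taking $U$ to range over a countable basis, $Y = \bigcap_U Y_U$ is comeager, hence non-empty, and by construction every $\bar{h} \in Y$ has $\{\bar{g}^l \bar{h} \bar{g}^{-l} : l \in \ZZ\}$ dense in $G^n$, i.e.\ $(\bar{g},\bar{h})$ cyclically generates $G^n$.

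So the heart of the matter is density of $Y_U$. Fix a non-empty basic open $V \subseteq G^n$; I must find $\bar{h}' \in V \cap Y_U$. Since $G = \Aut(M)$ carries the pointwise-convergence topology, a basic open neighbourhood in $G^n$ is determined, in each coordinate $i$, by a finite partial automorphism: $V$ consists of all $\bar{h}$ with $h_i \supseteq \psi_i$ for given partial automorphisms $\psi_i \colon C_i \to D_i$, and similarly $U$ is given by requiring the $i$-th coordinate to extend some partial automorphism $\phi_i' \colon A_i' \to B_i'$. I want $\bar{h}'$ with $h_i' \supseteq \psi_i$ for all $i$, and with $\bar{g}^l \bar{h}' \bar{g}^{-l} \in U$ for some single $l \in \ZZ$; the latter means $g_i^l h_i' g_i^{-l} \supseteq \phi_i'$ for every $i$, equivalently $h_i' \supseteq g_i^{-l}\phi_i' g_i^{l}$. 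Setting $\phi_i := g_i^{-l}\phi_i' g_i^{l}$ — but we don't yet know $l$ — I instead apply the hypothesis directly to the partial automorphisms $\psi_i \colon C_i \to D_i$ and $\phi_i' \colon A_i' \to B_i'$: it yields $k, m \in \ZZ$ such that for each $i$, the partial automorphisms $g_i^k \psi_i g_i^{-k}$ and $g_i^m \phi_i' g_i^{-m}$ have a common extension to an automorphism $\theta_i$ of $M$. Then set $h_i' := g_i^{-k}\theta_i g_i^{k}$. Since $\theta_i \supseteq g_i^k \psi_i g_i^{-k}$, we get $h_i' \supseteq \psi_i$, so $\bar{h}' \in V$; and since $\theta_i \supseteq g_i^m \phi_i' g_i^{-m}$, conjugating we get $g_i^{k-m} h_i' g_i^{-(k-m)} = g_i^{-m}\theta_i g_i^{m} \supseteq \phi_i'$, so with $l := k-m$ we have $\bar{g}^l \bar{h}' \bar{g}^{-l} \in U$, i.e.\ $\bar{h}' \in Y_U$. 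This proves $Y_U$ is dense.

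I expect the only genuinely delicate point to be the bookkeeping in matching the pointwise-convergence basic open sets with the partial automorphisms fed into the hypothesis, and in particular making sure a \emph{single} exponent $l = k-m$ works simultaneously across all $n$ coordinates — which is exactly what the hypothesis is designed to deliver (it produces $k$ and $m$ uniform in $i$). No amalgamation or structure-theoretic work is needed here; that is all hidden in verifying the hypothesis in applications. A final remark: $G^n$ is Polish since $G$ is, and the basic open sets described above indeed form a countable basis because $M$ is countable and there are only countably many finite partial automorphisms; this justifies the appeal to the Baire category theorem.
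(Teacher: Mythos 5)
Your proof is correct and follows essentially the same route as the paper: the paper fixes a tuple $\phi$ and shows the open set $Z_\phi=\{\bar h\colon \exists m\,\forall i\ g_i^m\phi_i g_i^{-m}\subseteq h_i\}$ is dense by intersecting it with a basic open set $[\psi]$ via the hypothesis, then takes $\bigcap_\phi Z_\phi\subseteq Y$, which is your $\bigcap_U Y_U$ in different notation. Your inline conjugation by $g_i^{-k}$ to absorb the exponent $k$ is exactly the paper's ``without loss of generality $k=1$'' normalization, so there is no substantive difference.
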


\begin{proof} 
Without loss of generality, we can assume that  $k=0$  in the statement of the lemma.

We fix $\phi=(\phi_1\colon A_1\to B_1,\ldots,  \phi_n\colon A_n \to B_n)$, and we will show that the set 
\[ Z_\phi=\{(h_1,\ldots, h_n)\in G^n\colon  \exists_m \forall_i \    g_i^m \phi_i g_i^{-m}\subseteq h_i\} \]  
is open and dense in $G^n$.

 Clearly, $Z_\phi$ is open. To show that it is also dense, fix $\psi=(\psi_1\colon C_1\to D_1,\ldots,  \psi_n\colon C_n\to D_n)$. The set
 $[\psi]\cap Z_\phi$ is non-empty, where 
  \[ [\psi]=\{(f_1,\ldots f_n)\in G^n \colon \forall_i \  \psi_i \subseteq f_i\}.\]
This follows from the assumption that there is $m$ such that for each $i \leq n$, the partial automorphisms $\psi_i\colon C_i\to D_i$  and $g_i^m\phi_i g_i^{-m}\colon g_i^m(A_i)\to g_i^m(B_i)$
have a common extension to an automorphism of $M$. 

Finally, notice that $Y\supseteq \bigcap_\phi Z_\phi$, hence $Y$ is comeager.
\end{proof}

We will use the following lemma  in Sections 3.2-3.4 to show that the automorphism group of each structure listed in Theorem \ref{przyklady} is cyclically generated by a pair that generates the free group.

\begin{lemma}\label{free}
Let $M$ be a countable structure with no algebraicity, and let $g\in\aut(M)$ be an automorphism with no cycles. Then for each
$n\in\NN$, the set
\[\{\bar{h}\in\aut(M)^n \colon (g,\ldots,g) \text{ and } \bar{h} \text{ generate the free group in } \aut(M)^n\}\]
and the set
\[\{h\in L_0(\aut(M))\colon f_g \text{ and } h \text{ generate the free group in } L_0(\aut(M))\}\]
are comeager.
\end{lemma}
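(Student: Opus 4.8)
The plan is to prove both comeagerness statements by exhibiting, for each non-trivial reduced word $w$ in two letters, a dense open set of $\bar h$ (or of $h$) on which $w(g,\ldots,g,\bar h)\neq \mathrm{Id}$, and then intersecting over the countably many such words. Concretely, fix a reduced word $w=x^{\epsilon_0}\, y^{\delta_1} x^{\epsilon_1} y^{\delta_2}\cdots$ of length $\ell\geq 1$ and, working first in $\aut(M)^n$, fix a coordinate $j\leq n$; it suffices to make the $j$-th coordinate of $w((g,\ldots,g),\bar h)$ nontrivial. I would prove that
\[
W_{w,j}=\{\bar h\in\aut(M)^n: w(g,h_j)(a)\neq a\text{ for some }a\in M\}
\]
(where $w(g,h_j)$ denotes the evaluation of $w$ in the $j$-th coordinate) is open — clear, since it is witnessed by a finite condition — and dense. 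Then $\bigcap_{w\neq e}\bigcup_{j\leq n} W_{w,j}$ is comeager and contained in the set in the statement. The $L_0$ case is analogous: for a reduced word $w$, the set of $h\in L_0(\aut(M))$ with $w(f_g,h)\neq \mathrm{Id}$ in $L_0(\aut(M))$ (equivalently, $\mu(\{x: w(f_g,h)(x)\neq\mathrm{Id}\})>0$) is $G_\delta$; it is also dense once one knows density of the step functions coming from the $\aut(M)^n$ case, exactly as in the proof of Lemma \ref{gen}.

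The heart of the matter — and the step I expect to be the main obstacle — is the density of $W_{w,j}$: given a basic open set, i.e. a partial automorphism $\psi\colon C\to D$ of $M$, I must extend $\psi$ to some $h\in\aut(M)$ so that $w(g,h)$ moves a point. The strategy is a ``ping-pong''/back-and-forth construction that builds $h$ one value at a time while tracking the orbit of a single starting point $a$ under the successive letters of $w$. Read $w$ from the inside out: starting from a point $a$ chosen outside $\dom(\psi)\cup\rng(\psi)$ and all its finitely many relevant $g$-translates (possible since $g$ has no cycles and these sets are finite), apply $g^{\pm}$ (already determined by $g$) and, whenever the word calls for $h^{\pm 1}$, use no algebraicity to choose the image (or preimage) of the current point to be a \emph{fresh} element — one not lying in the finite set of points already named by $\psi$, by $a$ and its iterates so far, or forced by previously chosen values of $h$ — and in general position, and extend $\psi$ accordingly by ultrahomogeneity. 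The key bookkeeping point is that no algebraicity guarantees $\aut_X(M).Y$ is infinite for disjoint finite $X,Y$, so at each step infinitely many choices of the next value are available, and we can always avoid the finitely many ``bad'' elements. Because every $h$-step lands on a genuinely new point, the endpoint of the orbit of $a$ under $w(g,h)$ is distinct from $a$; hence $w(g,h)(a)\neq a$. Extending the resulting finite partial automorphism to a full $h\in\aut(M)$ by ultrahomogeneity completes the density argument.

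A few details need care in carrying this out. First, reduced words must be handled so that consecutive letters never cancel — this is automatic from $w$ being reduced, but one should note that after applying $h^{\pm1}$ the next letter is a power of $g$ and vice versa, so the ``fresh point'' chosen for an $h$-step is never immediately undone. Second, when $w$ begins or ends with a power of $g$, the displacement must ultimately be detected after composing with those $g$-powers; here one uses once more that $g$ has no cycles to ensure $g^k$ of a fresh point stays away from $a$ for the finitely many relevant $k$. Third, for the $L_0$ statement one should observe that a single constant choice of $h$ on all of $X$ already works, so density of the step-function sets $A'_{2^n}$ (as in Lemma \ref{gen}) transfers: if $\bar h\in\aut(M)^{2^n}$ lies in the freeness set for $\aut(M)^{2^n}$, then the step function $f_{\bar h}$ lies in the freeness set for $L_0(\aut(M))$, and these step functions are dense, so the $G_\delta$ freeness set in $L_0(\aut(M))$ is dense, hence comeager. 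The only genuinely technical piece is the inductive fresh-point selection, which is routine given no algebraicity but must be written so that the finitely many exclusion sets at each stage are made explicit.
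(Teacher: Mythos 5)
Your proposal is correct and follows essentially the same route as the paper: for each reduced word $w$, the set of $h$ with $w(g,h)\neq\mathrm{Id}$ is open and dense (density via fresh-point choices using no algebraicity and the fact that $g$ has no cycles), and one intersects over the countably many words, transferring to $L_0(\aut(M))$ via step functions as in Lemma \ref{gen}; the paper merely asserts this density, while you spell out the ping-pong construction. The only small caveat is that the lemma assumes no algebraicity but not ultrahomogeneity, so where you invoke ultrahomogeneity you should instead work with restrictions of genuine automorphisms and use that the orbit of the current point under the pointwise stabilizer of the finitely many named points is infinite.
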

\begin{proof}
For any non-trivial reduced word $w(s,t)$, since $M$ has no algebraicity and $g$ has no cycles, the set
\[\{h\in\aut(M)\colon \exists x\in M\ w(g,h)(x)\neq x\}\]
is open and dense. Hence,
\[B_w=\{h\in\aut(M)\colon  w(g,h)\neq {\rm Id}\}\] is open and dense. Put $B=\bigcap_w B_w$.
 
As the first set listed in the conclusion of the lemma contains $B^n$, it is comeager. Moreover, the second set contains $L_0(B)$, so, by  \cite[Lemma 5]{KM}, it is comeager as well.
\end{proof}

\begin{proof}[Proof of Theorem \ref{use}]
The theorem  follows immediately  from Lemmas \ref{gen},  \ref{gener} and \ref{free}.
\end{proof}

\subsection{The free amalgamation property and the random tournament}

In this section, we prove the following theorem.
\begin{theorem}\label{sapplus}
Let $M$ be a  countable relational ultrahomogeneous 
structure  such that $\age(M)$ has the free amalgamation property or $M$ is the random tournament.
Then each $G^n$, $n \in \NN$, and $L_0(G)$, where $G={\rm Aut}(M)$, has a cyclically dense conjugacy class.
In fact, each of these groups is cyclically generated by a pair  generating the free group.
\end{theorem}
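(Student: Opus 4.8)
The plan is to derive Theorem~\ref{sapplus} from Theorem~\ref{use}, so that everything comes down to producing one suitable automorphism $g$ of $M$. Since the strong$^+$ amalgamation property obviously implies the strong amalgamation property (forget condition~(3) in the witnessing amalgam), $M$ has no algebraicity; hence, once we have found $g$ with no cycles satisfying the hypothesis of Theorem~\ref{use}, the ``moreover'' clause of that theorem (which goes through Lemma~\ref{free}) automatically upgrades the conclusion to a pair generating the free group. Conjugating the whole configuration of partial automorphisms by a power of $g$, the hypothesis of Theorem~\ref{use} is equivalent to the following: for all partial automorphisms $\psi_i\colon C_i\to D_i$ and $\phi_i\colon A_i\to B_i$, $i\le n$, of $M$, there is $m\ge 1$ with $\psi_i\cup g^m\phi_i g^{-m}$ extending to an automorphism of $M$ for every $i\le n$.

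First I would reduce this, using ultrahomogeneity and condition~(3) of strong$^+$ amalgamation, to the following property of $g$, call it ($\star$): \emph{for every finite substructure $X\subseteq M$ there is $m\ge 1$ with $X\cap g^m(X)=\emptyset$ such that $X$ and $g^m(X)$ are $(3)$-independent in $M$}, i.e.\ $p\cup q$ is a partial automorphism of $M$ for all partial automorphisms $p,q$ of $M$ with $\dom(p)\cup\rng(p)\subseteq X$ and $\dom(q)\cup\rng(q)\subseteq g^m(X)$; concretely, this holds as soon as the cross-relations between $X$ and $g^m(X)$ obey condition~$(3')$. Granting ($\star$): given the $\psi_i,\phi_i$, apply it to $X=\bigcup_{i\le n}(A_i\cup B_i\cup C_i\cup D_i)$ and get $m$; then for each $i\le n$ the partial automorphisms $\psi_i$ and $g^m\phi_i g^{-m}$ have domains and ranges inside $X$ and inside $g^m(X)$ respectively (the latter because $g\in\aut(M)$), and since $X\cap g^m(X)=\emptyset$, $(3)$-independence says precisely that their union $\psi_i\cup g^m\phi_i g^{-m}$ is again a partial automorphism of $M$, hence extends to an automorphism by ultrahomogeneity. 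Thus it remains to build a cycle-free $g$ with property ($\star$).

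For this I would construct the pair $(M,g)$ together, realizing $M$ (after forgetting $g$) as the \fra limit of $\age(M)$ and $g$ as a shift. Concretely, build a $\ZZ$-indexed family of ``layers'', each an increasing union of finite members of $\age(M)$, let $g$ shift layer $j$ onto layer $j+1$, and define the cross-layer relations shift-invariantly --- depending only on the distance between the layers --- choosing them so that at small distances they are rich enough to make the limit universal and ultrahomogeneous, while at all sufficiently large distances they are $(3)$-independent; the existence of such a coherent choice, together with enough fresh points to make $g$ a cycle-free bijection (here no algebraicity always leaves infinitely many admissible values), is obtained by repeated use of strong$^+$ amalgamation. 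Once the forgetful limit is checked to be the \fra limit of $\age(M)$, it is isomorphic to $M$ and $g$ transports to the desired automorphism: for finite $X\subseteq M$, any $m$ large enough that the layers of $g^m(X)$ lie at large distance from those of $X$ witnesses ($\star$). (For the transparent examples, such as the random tournament, one simply takes the layers to be copies of $M$ itself, linearly ordered, with all cross-edges pointing forward.)

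The main obstacle is exactly the coherence of this layered construction: one must arrange the distance-dependent cross-relations so that the forgetful limit really is universal \emph{and} ultrahomogeneous with age equal to $\age(M)$, and so that $(3)$-independence survives not merely for two far-apart layers but for configurations of three or more layers at once --- i.e.\ one must control multi-layer cross-relations, not only pairwise ones. This is precisely where \emph{strong$^+$} amalgamation is indispensable: strong amalgamation alone places no constraint on cross-relations (so independence can fail), while free amalgamation --- under which everything would be trivially independent --- fails for classes like finite tournaments. The clean way to package the bookkeeping is the family of auxiliary conditions $(\triangle_n)$ announced in Section~2: one shows that strong$^+$ amalgamation implies $(\triangle_n)$ for every $n$, and that $(\triangle_n)$ for every $n$ yields a cycle-free $g$ with property ($\star$), hence the hypothesis of Theorem~\ref{use}; together with Theorem~\ref{use} this proves Theorem~\ref{sapplus}.
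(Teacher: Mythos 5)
Your reduction is sound and matches the paper's top-level strategy: strong$^+$ amalgamation implies strong amalgamation, hence $M$ has no algebraicity, so a cycle-free $g$ verifying the hypothesis of Theorem~\ref{use} gives everything, including the free-group clause via Lemma~\ref{free}; and your condition ($\star$) (for each finite $X$ there is \emph{some} $m$ with $X\cap g^m(X)=\emptyset$ and $X$, $g^m(X)$ $(3)$-independent) does imply that hypothesis, by ultrahomogeneity. This part is correct and is essentially how the paper proceeds.

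The gap is in the construction of $g$, which is the heart of the matter, and the one concrete construction you offer is unsound. A shift-invariant layered structure whose cross-relations depend only on the distance between layers and are $(3)$-independent at \emph{all} sufficiently large distances cannot in general have ultrahomogeneous forgetful limit: for tournaments, $(3)$-independence of two sets forces all edges between them to point the same way, so if $a$ lies in layer $0$ and $b$ in a far layer $N$, a vertex $v$ with $v\to a$ and $b\to v$ would have to sit both below and above the fixed finite threshold window around layers $0$ and $N$ respectively, which is impossible for large $N$; in particular your ``transparent example'' (copies of the random tournament with all cross-edges pointing forward) is \emph{not} the random tournament, since no vertex dominates a point of one layer while being dominated by a point of a later layer. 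So ($\star$) can only be achieved with $m$ depending on $X$, and there is no global independence threshold to build into the structure; the actual argument must construct $g=\bigcup_k g_k$ by back-and-forth as an increasing union of finite cycle-free partial automorphisms of the \emph{fixed} $M$: at each stage one first extends the current partial map (using the strong extension property, available thanks to no algebraicity) so that suitable forward and backward iterates of the given finite configuration leave its domain and range, and then performs an amalgamation step placing the next images at positions independent of the rest. That amalgamation step is precisely what the conditions $(\triangle_n)$ encode; you invoke them as ``the clean way to package the bookkeeping,'' but you neither state them nor prove that strong$^+$ amalgamation implies them, and exactly these two items --- the implication (the paper's Lemma~\ref{sam}, one application of strong$^+$ amalgamation over $A=\rng(h)$ followed by the extension property) and the stage-by-stage recursion (the paper's Theorem~\ref{mmain}) --- constitute the substance of the proof. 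As it stands, your proposal is a correct reduction plus a deferral of the hard part, with the proposed route to it failing already for the random tournament.
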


We will need the following lemma.
\begin{lemma}\label{free-lemma}
Let $M$ be a  countable relational ultrahomogeneous 
structure in a language $L$
and suppose  that $\age(M)$ has the free amalgamation property or $M$ is the random tournament.
For each $p\colon X\to Y$, a partial automorphism of $M$ without cycles,
  and a finite set $P$,  there is a finite partial 
automorphism $q$ of $M$, extending $p$ and without cycles, and there is $k\in\mathbb{N}$
such that $P\cap q^k(P)=\emptyset$ and the following holds. 
For every relation $R\in L$ of arity $n$ and $s=(s_1,\ldots,s_n)\subseteq P\cup q^k(P)$ if 
$R(s_1,\ldots,s_n)$, then $s\subseteq P$ or $s\subseteq q^k(P)$, in the case of the free amalgamation property,
and for every $s\in P$ and $t\in q^k(P)$, $R(s,t)$, in the case of tournaments.
\end{lemma}

\begin{proof}

We recursively define the required $q$. We will do it in $k=2l-1$ steps, where $l$ is the maximum of the cardinalities
of orbits
of $p$.
Without loss of generality, extending $p$ and $P$ if necessary,  each orbit of $p$ has the cardinality $l$ and 
$X\cup Y=P$.
 Let $A_0=\dom(p)\setminus\rng(p)$ and $A_j=p^j(A_0)$, $j=1,\ldots,l$.
 
 We take $p_0=p$. For each $0<i\leq k$ we construct $p_i$ such that  each orbit of $p_i$ has the cardinality $l+i$
 and $\dom(p_{i+1})=\dom(p_i)\cup A_{l+i+1}$,
 where $A_j=p^j(A_0)$, $j=1,\ldots, l+i+1$.
Moreover, in the case of the free amalgamation,  for every relation $R\in L$ of arity~$n$ and $s=(s_1,\ldots,s_n)\subseteq  A_0\cup\ldots\cup A_{l+i}$ if 
$R(s_1,\ldots,s_n)$ and $s\cap A_0\neq\emptyset$, then $s\subseteq A_0\cup\ldots\cup  A_l=P$.
In the case of tournaments: for any $(s,t)\subseteq  A_0\cup\ldots\cup A_{l+i}$ such that $s\in A_0$ and 
$t\in A_l\cup\ldots\cup A_{l+i}$ it holds
$R(s,t)$.

Suppose that we have constructed $p_i$, $i<k$.
We now construct $p_{i+1}$. In the case of the free amalgamation, we amalgamate freely 
${\Id}\colon (A_1\cup\ldots\cup A_{l+i})\to (A_0\cup A_1\cup\ldots\cup A_{l+i})$ and 
$p_i^{-1}\colon (A_1\cup\ldots\cup A_{l+i})\to (A_0\cup A_1\cup\ldots\cup A_{l+i})$.
We can identify the free amalgam with 
${\Id}\colon A_0\cup\ldots\cup A_{l+i}\to A_0\cup\ldots\cup A_{l+i}\cup B$
and $p'_i\colon A_0\cup\ldots\cup A_{l+i}\to A_0\cup\ldots\cup A_{l+i}\cup B$, where $B\subseteq M$
is disjoint from $A_0\cup\ldots\cup A_{l+i}$ and  
$p'_i\colon A_0\cup\ldots\cup A_{l+i}\to A_1\cup\ldots\cup A_{l+i}\cup B$ is an isomorphism that extends $p_i$.
We let $p_{i+1}=p'_i$.
Then for every relation $R\in L$ of arity $n$ and $s=(s_1,\ldots,s_n)\subseteq A_0\cup\ldots\cup A_{l+i+1}$ if 
$R(s_1,\ldots,s_n)$ and $s\cap A_0\neq\emptyset$, then $s\subseteq A_0\cup\ldots  A_{l+i}$ and hence 
by induction
$s\subseteq A_0\cup\ldots  A_{l}$. We proceed similarly with tournaments. Instead of freely amalgamating 
${\Id}\colon (A_1\cup\ldots\cup A_{l+i})\to (A_0\cup A_1\cup\ldots\cup A_{l+i})$ and 
$p_i^{-1}\colon (A_1\cup\ldots\cup A_{l+i})\to (A_0\cup A_1\cup\ldots\cup A_{l+i})$, we require that the amalgam
${\Id}\colon A_0\cup\ldots\cup A_{l+i}\to A_0\cup\ldots\cup A_{l+i}\cup A_{l+i+1}$
and $p_i\colon A_0\cup\ldots\cup A_{l+i}\to A_0\cup\ldots\cup A_{l+i}\cup A_{l+i+1}$ satisfies:
for any $(s,t)\subseteq A_0\cup\ldots\cup A_{l+i+1}$ such that $s\in A_0$ and 
$t\in A_{l+i+1}$ it holds
$R(s,t)$, and hence by induction for any $(s,t)\subseteq A_0\cup\ldots\cup A_{l+i+1}$ such that $s\in A_0$ and 
$t\in A_l\cup\ldots\cup A_{l+i+1}$ it holds
$R(s,t)$.

Then $p_k$ is as required.

 


\end{proof}

Proposition \ref{bbaire} and Lemma \ref{free-lemma} will finish the proof of Theorem \ref{sapplus}.
\begin{proposition}\label{bbaire}
Let $n \in \NN$ be given. Then 
there is  $g\in G$ that has no cycles with the following property:
for every choice of  partial automorphisms
$\phi_i\colon A_i\to B_i$ and  $\psi_i\colon C_i\to D_i$, $i \leq n$, of $M$,
there is $k \in \ZZ$ such that for every  $i \leq n$, the partial automorphisms $g^k\psi_i g^{-k}$ and $\phi_i $ have a common extension to an automorphism of $M$.
\end{proposition}
\begin{proof}
Let $P=\bigcup_i(A_i\cup B_i)\cup\bigcup_i (C_i\cup D_i)$ be given and let $p$ be a partial automorphism of 
$M$, which does not have cycles. Apply Lemma \ref{free-lemma} to get $k$  and $q$  extending $p$ that
and are as in the conclusion of that lemma.
That implies that for each $i$, $\phi_i$, a partial automorphism of $P$, and 
$q^k\psi_i q^{-k}$, a partial automorphism of $q^k(P)$, have a common extension to an automorphism of $M$.

Therefore for given $\phi_i\colon A_i\to B_i$ and  $\psi_i\colon C_i\to D_i$, $i \leq n$, the set of  $h\in G$
such that there is $k \in \ZZ$ such that for every  $i \leq n$, the partial automorphisms $h^k\psi_i h^{-k}$, and $\phi_i $ have a common extension to an automorphism of $M$,
is dense in $T=\{h\in G\colon h \text{ does not have cycles}\}$. Moreover the set of such $h$'s is a  $G_\delta$
in $T$.
By the Baire category theorem, taking the intersection over all possible choices of 
 $\phi_i\colon A_i\to B_i$ and  $\psi_i\colon C_i\to D_i$, $i \leq n$, 
 we obtain that there exists the required $g$.
\end{proof}



\subsection{The rational Urysohn metric space}

Using Solecki \cite[Theorem 3.2]{S}, we can conclude from Theorem \ref{use}
that each $\Aut(\UU)^n$, $n \in \NN$, and  $L_0(\Aut(\UU))$, have a cyclically dense conjugacy class. 
 In this section, we show the following theorem.
\begin{theorem}\label{urysohn}
Each $\aut(\UU)^n$, $n\in\NN$, and $L_0(\aut(\UU))$,  has a cyclically dense conjugacy class. 
In fact, each of these groups is cyclically generated by a pair generating the free group.
 \end{theorem}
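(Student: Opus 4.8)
\textbf{Proof proposal for Theorem \ref{urysohn}.}

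The plan is to reduce to Theorem \ref{use} exactly as in the proof of Theorem \ref{mmain}, but replacing the use of condition $(\triangle_n)$ with Solecki's theorem \cite[Theorem 3.2]{S}. That theorem states, roughly, that partial isometries of a finite metric subspace of $\UU$ can be extended to partial isometries defined on a larger finite set in a way that controls the ``reach'' of their iterates; this is precisely the kind of back-and-forth extension lemma that in the relational case was packaged as $(\triangle_n)$. So first I would record that $\UU$ has no algebraicity and that $\aut(\UU)$ contains automorphisms with no cycles (both standard: the class of finite rational metric spaces has strong amalgamation, and one builds a cycle-free automorphism by a routine back-and-forth, as in the $g_1=\emptyset$ construction in Theorem \ref{mmain}). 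Given these, by Theorem \ref{use} it suffices to produce a single $g\in\aut(\UU)$, with no cycles, such that for all $n$ and all tuples of partial isometries $\phi_i\colon A_i\to B_i$, $\psi_i\colon C_i\to D_i$, $i\le n$, there are $k,m\in\ZZ$ with $g^k\phi_i g^{-k}\cup g^m\psi_i g^{-m}$ a partial isometry for every $i$.

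The construction of $g$ mirrors the one in Theorem \ref{mmain}: enumerate $\UU$ and enumerate all tuples of pairs of partial isometries, and build an increasing chain $g_1\subseteq g_2\subseteq\cdots$ of finite partial isometries without cycles, with the even stages ensuring surjectivity/totality (using the strong extension property, which holds here since the age has strong amalgamation) and the odd stages handling the $k$-th tuple $(p_{k,i},q_{k,i})_i$. For the odd step I would, as before, first extend $g_{2k}$ to a partial isometry $h$ pushing $E=\bigcup_i(A_{k,i}\cup B_{k,i})$ forward $m$ times off $\dom(h)$ and $F=\bigcup_i(C_{k,i}\cup D_{k,i})$ backward $m$ times off $\rng(h)$, by the same two-part chain construction $f_0\subseteq\cdots\subseteq f_m$ with the strong extension property. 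Then, in place of applying $(\triangle_n)$, I would apply Solecki's extension theorem to the partial isometries $s_i=h^{-m}q_{k,i}h^m$ on $h^{-m}(C_{k,i}\cup D_{k,i})$ and $t_i=h^m p_{k,i}h^{-m}$ on $h^m(A_{k,i}\cup B_{k,i})$ together with $h$, to obtain a partial isometry $\alpha$ on $\bigcup_i h^m(A_{k,i}\cup B_{k,i})$ such that $h\cup\alpha$ is a partial isometry and each $(\alpha t_i\alpha^{-1})\cup s_i$ is a partial isometry; set $g_{2k+1}=h\cup\alpha$, note it has no cycles, and observe $\alpha h^m p_{k,i}h^{-m}\alpha^{-1}=g_{2k+1}^{m+1}p_{k,i}g_{2k+1}^{-(m+1)}$ so that the required common-extension property holds with $k$ replaced by $m+1$ and the other shift by $-m$. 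Finally $g=\bigcup_k g_k$; it is an automorphism of $\UU$ with no cycles, so Theorem \ref{use} applies and yields, for every $n$, a $\bar b\in G^n$ with $((g,\dots,g),\bar b)$ cyclically generating $G^n$ and generating a free group, and likewise $(f_g,b)$ for $L_0(G)$.

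The main obstacle is verifying that Solecki's theorem genuinely supplies what $(\triangle_n)$ supplied. Two points need care. First, one must check that the disjointness hypotheses are met: $\bigcup_i(X_i\cup Y_i)$ disjoint from $\rng(h)$ and $\bigcup_i(P_i\cup Q_i)$ disjoint from $\dom(h)$, which is exactly what the preliminary construction of $h$ arranges, but one should confirm Solecki's statement takes the ``amalgam base'' to be $\rng(h)$ resp.\ $\dom(h)$ correctly. Second, and more delicately, metric amalgamation is not free: distances between the $S$-part and the $T$-part in the amalgam are constrained (they cannot violate the triangle inequality through the common base), so the clean ``$p\cup q$ is always an isomorphism'' conclusion of $(\triangle_n)$ condition (3) must be replaced by Solecki's more quantitative statement, and one must check that this weaker conclusion still yields that $(\alpha t_i\alpha^{-1})\cup s_i$ is a partial isometry — this works because $\alpha t_i\alpha^{-1}$ lives entirely on the image of the $C$-side and $s_i$ on the image of the $A$-side, which Solecki's construction keeps metrically compatible. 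I expect the bookkeeping to go through with no new ideas, the role of \cite[Theorem 3.2]{S} being a drop-in replacement for Lemma \ref{sam}; the no-algebraicity and no-cycles ingredients, hence the free-group conclusion via Lemma \ref{free}, are immediate.
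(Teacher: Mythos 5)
Your global strategy is the same as the paper's (reduce to Theorem \ref{use} by a back-and-forth construction of a cycle-free $g$, handling the tuples $(p_{k,i},q_{k,i})_i$ at odd stages via Solecki's theorem, and get the free-group clause from no algebraicity plus no cycles via Lemma \ref{free}), but the execution of the odd stage has a genuine gap. You retain the $(\triangle_n)$-machinery — first build an auxiliary $h\supseteq g_{2k}$ pushing $E$ forward and $F$ backward by generic strong-extension steps, then ask Solecki's theorem to hand you an $\alpha$ on $\bigcup_i h^m(A_{k,i}\cup B_{k,i})$ with $h\cup\alpha$ and each $(\alpha t_i\alpha^{-1})\cup s_i$ a partial isometry. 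Solecki's Theorem 3.2 does not do this: its input is a single partial isometry $p$ of a finite metric space $A$, and its output is a finite space $B\supseteq A$, a finite-order isometry $\tilde p$ of $B$ extending $p$, and $m$ with the amalgamation property (iii) over the set $Z(p)$ of \emph{cyclic points of $p$} (not over $\rng(h)$ or $\dom(h)$, as you suggest when you say one should ``confirm the amalgam base''). It takes no auxiliary family $s_i,t_i,h$ and produces no $\alpha$ compatible with a separately constructed $h$. Asserting that it is a ``drop-in replacement for Lemma \ref{sam}'' is exactly the step that needs proof and that the paper explicitly doubts: the paper states that conditions $(\triangle_n)$ do not seem to hold for $\UU$, so the claim that ``Solecki's construction keeps the two sides metrically compatible'' for your generically built $h$ is unsupported, and the cross-distances from $\alpha$-images to $\rng(h)$ and to $h^{-m}(F)$ are constrained by triangle inequalities in a way your argument never addresses.

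The paper's actual proof restructures the odd stage so that no $h$ and no abstract $\alpha$ are needed: take $A$ to be the union of the domains and ranges of $g_{2k}$ and of all $p_{k,i},q_{k,i}$, apply Solecki's theorem to $p=g_{2k}$ (which has no cycles, so $Z(p)=\emptyset$), realize $B$ inside $\UU$, and set $g_{2k+1}=\tilde p\rest\bigl(\bigcup_{i=0}^{m-1}\tilde p^i(A)\bigr)$. Remark \ref{acyc} then gives that every point of $A$ is at the \emph{same} distance $2\,{\rm diam}(A)$ from every point of $\tilde p^m(A)$, and this constant cross-distance is what makes $g_{2k+1}^{m}p_{k,i}g_{2k+1}^{-m}\cup q_{k,i}$ a partial isometry (note only one side is conjugated, which suffices for Theorem \ref{use} with exponents $m$ and $0$), while the same remark gives that $g_{2k+1}$ has no cycles. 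So the essential missing idea in your write-up is that the map transporting the $A$-side away must be (a restriction of) Solecki's $\tilde p$ itself, applied to the whole configuration at once — your preliminary construction of $h$ is both unnecessary and leaves you with no tool to produce the required $\alpha$.
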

We point out that Solecki \cite[Corollary 4.1]{S} already
showed that each $\Aut(\UU)^n$, $n \in \NN$, has 
a cyclically dense conjugacy class.
 
 Let us first recall Solecki's theorem:

 \begin{theorem}[Solecki, Theorem 3.2 in \cite{S} ]\label{slawek}
 Let a finite metric space $A$ and a partial isometry $p$ of $A$ be given. Then there exist a finite metric space $B$ with $A\subseteq B$ as metric spaces,
 an isometry $\tilde{p}$ of $B$ extending $p$, and a natural number $m$ such that:
 \begin{itemize}
 \item[$(i)$] $\tilde{p}^{2m}=\Id_B$;
 \item[$(ii)$] if $a\in A$ has no cycles, then $\tilde{p}^j(a)\neq a$ for all $0<j<2m$;
 \item[$(iii)$] $A\cup \tilde{p}^m(A)$ is the amalgam of $A$ and $\tilde{p}^m(A)$ over $(Z(p), id_{Z(p)}, \tilde{p}^m\rest Z(p))$,
 where $Z(p)$ is the set of all cyclic points of $p$.
 \end{itemize}
 \end{theorem}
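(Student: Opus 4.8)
The plan is to build $B$ as a cyclic amalgam of $N=2m$ copies of $A$ glued along $p$, metrized by a shortest--path (geodesic) metric. Concretely, I would first decompose $A$ under iteration of $p$ into \emph{cyclic orbits}, on which $p$ acts as a permutation and which make up $Z(p)$, and \emph{chains} $a_0\to a_1\to\cdots\to a_k$ with $a_0\notin\rng(p)$ and $a_k\notin\dom(p)$. Let $L$ be the least common multiple of the lengths of the cyclic orbits, choose $m$ to be a multiple of $L$ that strictly exceeds the length of every chain, and set $N=2m$. Define $B=(A\times\ZZ_N)/\!\sim$, where $\sim$ is generated by $(p(b),i)\sim(b,i+1)$ for $b\in\dom(p)$, $i\in\ZZ_N$; set $\tilde{p}[(b,i)]=[(b,i+1)]$ and embed $A$ into $B$ via $a\mapsto[(a,0)]$. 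On $B$ put the weighted graph joining $[(a,i)]$ and $[(a',i)]$ with weight $d_A(a,a')$ for all $a,a'\in A$, $i\in\ZZ_N$ (the edges internal to the $i$-th translate $\tilde{p}^i(A)$), and let $d_B$ be the induced shortest--path pseudometric.

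First I would record the purely combinatorial facts. The generators of $\sim$ are level--equivariant, so $\tilde{p}$ is a well--defined bijection with $\tilde{p}^{N}=\Id$ (automatic from the $\ZZ_N$ coordinate) that restricts to $p$ on the image of $A$. Because $L\mid N$, the relation does not collapse distinct cyclic points of level $0$ (a level--preserving $\sim$-path applies $p^{-jN}$, which fixes every $z\in Z(p)$), so $a\mapsto[(a,0)]$ is injective; it also gives $\tilde{p}^m\rest Z(p)=\Id_{Z(p)}$. The translates $\tilde{p}^i(A)$ cover $B$, adjacent ones meeting in $\tilde{p}^i(\dom p)=\tilde{p}^{i+1}(\rng p)$. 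Since $[(a,m)]=[(p^m(a),0)]$ whenever $p^m(a)$ is defined, and $m$ exceeds every chain length, one has $\dom(p^m)=Z(p)$, so $A\cap\tilde{p}^m(A)=Z(p)$, which is the amalgam base of (iii). Finally $\tilde{p}$ preserves edge weights, so $d_B$ is $\tilde{p}$-invariant, and as a shortest--path metric of a rational--weighted graph all its values are rational.

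The crux is a \emph{no--shortcut lemma}: for $x,y$ in a common translate $\tilde{p}^i(A)$ one has $d_B(x,y)=d_A(x,y)$, i.e.\ the internal edge is already a geodesic. To prove it I would take any path from $x$ to $y$ and flatten it into $\tilde{p}^i(A)$ without increasing weight: whenever the path leaves through a shared point $s$, wanders through neighbouring translates, and returns at a shared point $s'$, the fact that the gluings are performed along the \emph{isometry} $p$ means $s,s'$ have the same distance in every translate containing them, so the excursion can be replaced by the internal edge $s\to s'$; iterating and applying the triangle inequality inside $A$ yields a path of weight $\ge d_A(x,y)$. This gives $d_B\rest A=d_A$ (so $A\subseteq B$ isometrically), forces $d_B$ to be a genuine metric, and yields (ii): for non--cyclic $a$ the classes $[(a,0)],\ldots,[(a,N-1)]$ are pairwise distinct (no chain wraps) and hence at positive distance, so $\tilde{p}^{j}(a)\neq a$ for $0<j<2m$. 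For (iii) the same flattening shows that a geodesic between a non--cyclic point of $A$ and one of $\tilde{p}^m(A)$ must pass through points shared by non--adjacent translates; since non--cyclic shared points join only adjacent translates and $m$ is large, every such geodesic routes through $Z(p)$, so the cross distances are exactly those of the free metric amalgam over $Z(p)$.

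The main obstacle will be the no--shortcut lemma together with the routing statement for (iii): both amount to controlling geodesics of the cyclic amalgam, and this is exactly where the isometry property of $p$ (equating distances of shared points across translates) and the two size requirements on $m$ enter --- $L\mid m$ to make the embedding injective, to fix $Z(p)$ under $\tilde{p}^m$, and (with $\tilde{p}^{2m}=\Id$ automatic) to secure (i); and $m$ larger than every chain to force $A\cap\tilde{p}^m(A)=Z(p)$. Everything else reduces to bookkeeping about the orbit structure of $p$ and about the shortest--path pseudometric.
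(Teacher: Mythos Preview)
The paper does not prove this theorem; it is quoted from Solecki \cite{S} and used as a black box in the proof of Theorem~\ref{urysohn}. There is thus no argument in the present paper to compare yours against.

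Independently of that, your attempt has a genuine gap at (iii). The shortest-path metric on the cyclic amalgam does \emph{not} force geodesics between $A$ and $\tilde{p}^{\,m}(A)$ to route through $Z(p)$. Take $A=\{a,b,z\}$ with $d(a,b)=1$, $d(a,z)=d(b,z)=10$, and $p(a)=b$, $p(z)=z$; then $Z(p)=\{z\}$, the only chain has length $1$, and $m=2$ meets your hypotheses. In your graph the path
\[
[(a,0)]\;\longrightarrow\;[(b,0)]=[(a,1)]\;\longrightarrow\;[(b,1)]=[(a,2)]
\]
has total weight $2$, whereas the amalgam distance over $Z(p)$ from $[(a,0)]\in A$ to $[(a,2)]\in\tilde{p}^{\,2}(A)$ is $d(a,z)+d(z,a)=20$. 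The same failure occurs when $Z(p)=\emptyset$: by Remark~\ref{acyc} the cross distances should all equal $2\,\mathrm{diam}(A)$, but your path metric yields values governed by $m$ and by the minimal nonzero distance in $A$, not by the diameter. Your justification --- that non-cyclic shared points join only adjacent translates, hence geodesics must hit $Z(p)$ --- is wrong on both counts: a chain point on a chain of length $k$ lies in $k{+}1$ consecutive translates, and even if every shared point joined only two adjacent translates a geodesic could still thread levels $0\to1\to\cdots\to m$ one step at a time without ever meeting $Z(p)$.

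The $\ZZ_N$-quotient you build is a correct combinatorial carrier for (i) and (ii), and your no-shortcut argument for the isometric embedding of $A$ is plausible. But (iii) requires the cross distances between $A$ and $\tilde{p}^{\,m}(A)$ to be \emph{prescribed} as those of the metric amalgam over $Z(p)$, not recovered from a path metric; one must then verify the triangle inequality across all of $B$, and that verification is where the actual work in Solecki's proof lies.
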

 
 \begin{remark}\label{acyc}
{\rm{  If  $p$ has no cycles, then $(iii)$ says that the distance between any point in $A$ and any point in $\tilde{p}^m(A)$  is equal to 
${\rm diam}(A)+{\rm diam}(\tilde{p}^m(A))=2{\rm diam}(A)$. Moreover, in that case, the partial isometry
\[ p_1=\tilde{p}\rest\left( \bigcup_{i=0}^{m-1} \tilde{p}^i(A) \right) \]
has no cycles as well.}}
\end{remark}
 
 \begin{proof}[Proof of Theorem \ref{urysohn}]
 We will verify that the assumptions of Theorem \ref{use} are satisfied.
We start with  enumerating $\UU$ into $\{ m_k\}_{k\geq 2}$ and 
 enumerating all finite tuples of pairs  of the same length of partial automorphisms of $\UU$ into $\{(p_{k,i}, q_{k,i})_{i}\}_{k\geq 2}$.
We then construct a  sequence $\{g_k\}_{k\geq 1}$, with $g_1$ equal to the empty function and $g_{k+1}$ extending $g_k$ for each $k$, of partial automorphisms
of $\UU$ such that for each $k$:
 \begin{enumerate}
 \item $g_k$ has no cycles;
 \item  $m_k$ is both in the domain and in the range of $g_{2k}$;
 \item  there is $m$ (that depends on $k$, but not on $i$) such that for every $i$, $g_{2k+1}^{m}p_{k,i} g_{2k+1}^{-m}\cup 
 q_{k,i} $ is a partial automorphism.
 \end{enumerate} 
 
 At even steps we make sure that (1) and (2) are satisfied, which is quite straightforward, and at odd steps that (1) and (3) are satisfied. Suppose that we constructed $g_{2k}$ and we want to construct $g_{2k+1}$ with the required properties.
 We have $p_{k,i}\colon A_{k,i}\to B_{k,i}$ and $q_{k,i}\colon C_{k,i}\to D_{k,i}$, $i \leq n$. 
 Let $A$ be the union of all the domains and ranges of $g_{2k}$, $p_{k,i}$, and $q_{k,i}$, $i \leq n$, and let $p=g_{2k}$.
 Apply Theorem \ref{slawek}, and get $B$, $\tilde{p}$ and $m$. We let 
 \[ g_{2k+1}=\tilde{p}\rest\left( \bigcup_{i=0}^{m-1} \tilde{p}^i(A) \right). \]
 Using observations  in Remark \ref{acyc}, we conclude that $g_{2k+1}$ has no cycles.
 We also see that for each $i \leq n$, 
 $g_{2k+1}^{m}p_{k,i} g_{2k+1}^{-m}\colon  g_{2k+1}^{m}(A_{k,i})\to g_{2k+1}^{m}(B_{k,i})$ and 
 $q_{k,i} \colon C_{k,i}\to D_{k,i}$, the map
 $g_{2k+1}^{m}p_{k,i} g_{2k+1}^{-m}\cup q_{k,i}$ is a partial automorphism. 
Indeed, as we observed  in Remark \ref{acyc}, the distance between a point in $\bigcup_i \left(C_{k,i}\cup D_{k,i}\right)$ and a point in 
 $\bigcup_i\left( g_{2k+1}^{m}(A_{k,i})\cup g_{2k+1}^{m}(B_{k,i})\right)$ is always the same.
 The required $g$ is $\bigcup_{k} g_{k}$.
 \end{proof}

\subsection{The random poset and other examples}
In this section, we verify Theorem  \ref{przyklady}(iii)-(v). We first focus on the random poset $\mathbb{P}$ and show the following.

\begin{theorem}\label{tposet}
Each $\aut(\mathbb{P})^n$,  $n\in\NN$, and $L_0(\aut(\mathbb{P}))$, has a cyclically dense conjugacy class.
In fact, each of these groups is cyclically generated by a pair  generating the free group.
\end{theorem}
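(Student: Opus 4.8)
The plan is to verify the hypotheses of Theorem~\ref{use} directly for $G=\aut(\mathbb{P})$, exactly as was done for the rational Urysohn space in Theorem~\ref{urysohn}. Concretely, I would build, by a back-and-forth over an enumeration $\{m_k\}_{k\geq 2}$ of $\mathbb{P}$ and an enumeration $\{(p_{k,i},q_{k,i})_i\}_{k\geq 2}$ of all finite tuples of pairs of partial automorphisms of $\mathbb{P}$, an increasing sequence $\{g_k\}_{k\geq 1}$ of partial automorphisms with $g_k$ having no cycles, with $m_k\in\dom(g_{2k})\cap\rng(g_{2k})$, and with the property that for each $k$ there is an $m$ (depending on $k$ but not on $i$) so that $g_{2k+1}^m p_{k,i} g_{2k+1}^{-m}\cup q_{k,i}$ is a partial automorphism for every $i$. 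The even steps, which ensure $g=\bigcup_k g_k$ is a total automorphism, are handled by the strong extension property exactly as in the proof of Theorem~\ref{mmain}. The odd steps are where the combinatorics of posets enters.

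For the odd step, since the random poset does not satisfy $(\triangle_n)$ and is not a metric structure, I would replace Solecki's theorem by the corresponding structure-theoretic input for posets, namely the work of Glass, McCleary and Rubin \cite{GMR} (their Proposition~4.1, already invoked in the introduction). The needed statement is an analogue of Theorem~\ref{slawek}: given a finite poset $A$ and a partial automorphism $p$ of $A$, there is a finite poset $B\supseteq A$, an automorphism $\tilde p$ of $B$ extending $p$, and $m\in\NN$ such that $\tilde p^{2m}=\Id_B$, acyclic points of $p$ stay acyclic under $\tilde p^j$ for $0<j<2m$, and $A\cup\tilde p^m(A)$ is the free (or suitably strong) amalgam of $A$ and $\tilde p^m(A)$ over the cyclic part of $p$. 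Taking $A$ to be the union of all domains and ranges of $g_{2k}$, of the $p_{k,i}$, and of the $q_{k,i}$, and $p=g_{2k}$, I set $g_{2k+1}=\tilde p\rest\bigl(\bigcup_{i=0}^{m-1}\tilde p^i(A)\bigr)$. Acyclicity of $g_{2k+1}$ follows as in Remark~\ref{acyc}. The key point is that $\bigcup_i(C_{k,i}\cup D_{k,i})$ lives inside $A$ while $\bigcup_i\bigl(g_{2k+1}^m(A_{k,i})\cup g_{2k+1}^m(B_{k,i})\bigr)$ lives inside $\tilde p^m(A)$, and these two finite posets are amalgamated as freely as possible over the (empty, since $g_{2k}$ has no cycles) cyclic part; hence there is no order relation between a point of the first set and a point of the second, so $g_{2k+1}^m p_{k,i}g_{2k+1}^{-m}\cup q_{k,i}$ is automatically order-preserving and injective, i.e.\ a partial automorphism.

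The main obstacle I anticipate is precisely locating, in \cite{GMR}, the ``relativized/finite'' version of the periodic extension lemma in the form just described, and checking that the amalgam $A\cup\tilde p^m(A)$ is genuinely free over the cyclic part rather than merely strong in some weaker sense; in the poset setting one must be careful that adding no relations between the two copies does not create a violation of transitivity, which is where the acyclicity of $g_{2k}$ (so that the overlap is trivial) is essential. A secondary technical point is that, unlike in the metric case where ``distance is always $2\,\mathrm{diam}(A)$'' immediately gives the common extension, here one must phrase the no-relation condition for a pair of directed relations $x<y$ and check it in both directions; this is routine once the GMR input is in place. Finally, the conclusion that the pair can be taken to generate the free group follows from Lemma~\ref{free}, since $\mathbb{P}$ has no algebraicity (the class of finite posets has strong amalgamation) and the constructed $g$ has no cycles, and the $L_0$ statement then follows from Lemma~\ref{gen} together with the $G^n$ statements, exactly as in Theorem~\ref{use}.
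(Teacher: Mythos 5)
Your overall frame (verify the hypotheses of Theorem \ref{use}, then get the free-group and $L_0$ statements from Lemmas \ref{free} and \ref{gen}) is the right one, but the key lemma your odd step rests on is false, so the proof has a genuine gap. There is no poset analogue of Solecki's Theorem \ref{slawek}: if $p$ is a partial automorphism of a poset $A$ with $p(a)=b$ for some $a<b$, then any automorphism $\tilde p$ of any poset $B\supseteq A$ extending $p$ satisfies $a<\tilde p(a)<\tilde p^2(a)<\cdots$, so no power of $\tilde p$ can be the identity; in particular no extension with $\tilde p^{2m}=\Id_B$ exists (equivalently, finite posets fail the Hrushovski-type extension property that underlies the Urysohn argument). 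Such partial automorphisms inevitably occur among the $p_{k,i}$, and indeed already in the $g_{2k}$ produced by your even steps, so the ``periodic extension plus free amalgam over the cyclic part'' input you hope to extract from \cite{GMR} does not exist and is not what that paper proves. Your final paragraph correctly senses this danger but does not resolve it.

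What \cite{GMR} actually supplies (Corollary 3.3, quoted as Proposition \ref{lposet}) is a single automorphism $g$ of $\mathbb{P}$ with $x\leq g(x)$ for all $x$ and with an orbit $\{g^n(a)\}_{n\in\ZZ}$ that is coterminal, i.e.\ every $x$ satisfies $g^n(a)\leq x\leq g^{n+1}(a)$ for some $n$. The paper's proof uses this $g$ directly, with no back-and-forth: given $\phi_i\colon A_i\to B_i$ and $\psi_i\colon C_i\to D_i$, choose (by the extension property) a point $x$ strictly above all of $\bigcup_i(A_i\cup B_i)$, then use the two properties of $g$ (via Lemma \ref{llposet}) to find $m$ with $x\leq g^m(z)$ for every $z\in\bigcup_i(C_i\cup D_i)$; now every point of the first block is strictly \emph{below} every point of the conjugated second block, so $\phi_i$ together with the conjugate of $\psi_i$ by $g^{m}$ is order-preserving and extends to an automorphism by ultrahomogeneity, verifying Theorem \ref{use}. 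Note the mechanism is the opposite of the one you propose: the two blocks are separated by making \emph{all} cross-relations hold in one direction, not by making none hold; the latter cannot be arranged through a periodic extension, for the reason above. Since this $g$ has no cycles (from $x\leq g(x)$ and coterminality of the orbit of $a$, a periodic point would force $\mathbb{P}$ to be a single point) and $\mathbb{P}$ has no algebraicity, the free-group refinement and the $L_0$ statement then follow exactly as you indicate.
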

It already follows from the work of  Glass-McCleary-Rubin \cite{GMR} that 
$\aut(\mathbb{P})$ has a cyclically dense conjugacy class whose elements generate the free group.

To prove Theorem \ref{tposet}, we will need Proposition \ref{lposet} and Lemma \ref{llposet}.
\begin{proposition}[Corollary 3.3 \cite{GMR}]\label{lposet}
There exists $g\in \aut(\mathbb{P})$ with the following two properties:
\begin{itemize}
\item[(1)] for every $x\in \mathbb{P}$, we have $x\leq g(x)$;
\item[(2)] there exists $a\in \mathbb{P}$ such that for any $x\in \mathbb{P}$ there is $n\in\ZZ$ with $g^n(a)\leq x\leq g^{n+1}(a)$. 
\end{itemize}
\end{proposition}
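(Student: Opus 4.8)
The plan is to build $g$ directly as the union of an increasing chain of finite partial automorphisms of $\mathbb{P}$, by a back-and-forth construction that simultaneously produces the two-sided orbit $\{g^{n}(a):n\in\mathbb{Z}\}$, which I will call the \emph{spine}. I would fix a point $a\in\mathbb{P}$ and an enumeration $\mathbb{P}=\{x_{k}:k\geq 1\}$, construct finite partial automorphisms $p_{1}\subseteq p_{2}\subseteq\cdots$, and set $g=\bigcup_{k}p_{k}$, maintaining at stage $k$ that: $p_{k}$ is a partial automorphism with $y\leq p_{k}(y)$ for all $y\in\dom(p_{k})$ (which is exactly what gives property (1) in the limit); a finite segment of the spine, $p_{k}^{s_{k}}(a),\dots,p_{k}^{t_{k}}(a)$ with $s_{k}\leq 0\leq t_{k}$, is already defined (it is automatically an increasing chain, by the previous invariant); and each $x_{j}$ with $j<k$ lies in $\dom(p_{k})\cap\rng(p_{k})$ and is already caught between two consecutive spine points, $p_{k}^{n}(a)\leq x_{j}\leq p_{k}^{n+1}(a)$ for some admissible $n$.

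The step handling $x_{k+1}$ has two ingredients. The routine one is the ``one-point extension'': to force $x_{k+1}\in\dom(p)$, use the extension property of $\mathbb{P}$ together with the strong amalgamation property of the class of finite posets to pick a fresh point $v$ realizing over $\rng(p)$ the $p$-image of the type of $x_{k+1}$ over $\dom(p)$, with the additional demand $x_{k+1}\leq v$; that this demand is consistent with a partial order is immediate from transitivity and the invariant $y\leq p(y)$. Forcing $x_{k+1}\in\rng(p)$ is symmetric, applied to $p^{-1}$, using $p^{-1}(y)\leq y$. The essential ingredient is extending the spine so that $x_{k+1}$ becomes trapped between two consecutive spine points. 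Since $g$ moves everything up, the spine has a current top $p^{t}(a)$ and a current bottom $p^{s}(a)$; the construction is organized so that, at the moment $x_{k+1}$ is processed, it lies beyond the spine on one side --- say above $p^{t}(a)$ --- and then one appends two new spine elements: a generic point strictly between $p^{t}(a)$ and $x_{k+1}$, and then a generic point above both that point and $x_{k+1}$, each obtained from strong amalgamation of finite posets and left incomparable to the rest of the current finite set. One also arranges at every stage to push the top of the spine above, and the bottom below, all points enumerated so far, so that in the limit the spine is cofinal and coinitial; this is what makes condition (2) hold for \emph{every} $x\in\mathbb{P}$ and not just for the early ones. Finally one alternates the domain/range roles so that $g=\bigcup_{k}p_{k}$ is a total automorphism.

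I expect the main obstacle to be precisely the bookkeeping behind the spine extension: one must interleave the spine-extension steps and the one-point-extension steps so that no $x_{k}$ is ever left incomparable to the whole current spine before it has been handled --- once an element is incomparable to the entire spine, no consecutive pair of spine points can ever sandwich it --- while still exhausting $\mathbb{P}$ on both sides and keeping the spine cofinal and coinitial. Pinning down this interleaving, i.e.\ specifying at each stage which new spine points to create and which old elements they must be comparable to, is the real content of the argument, and is what Corollary 3.3 of \cite{GMR} supplies. The remaining verifications --- the transitivity checks that preserve $y\leq p(y)$ throughout, and the standard check that the limit of the back-and-forth is an automorphism --- I would only sketch.
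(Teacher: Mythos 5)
The paper offers no proof of this proposition at all --- it is quoted from Corollary 3.3 of \cite{GMR} --- so the only question is whether your construction is sound, and it is not: the invariant you want to maintain (every point handled so far lies between two \emph{consecutive} spine points) cannot be maintained, and in fact condition (2) in this literal consecutive form fails for every $g\in\aut(\mathbb{P})$ and every $a$. Indeed, by (1) the orbit of $a$ is an increasing chain (and $g(a)=a$ makes (2) absurd), so $a<g(a)<g^{2}(a)$ are three distinct points; the extension property of the random poset then provides $x\in\mathbb{P}$ with $a<x<g^{2}(a)$ and $x$ incomparable to $g(a)$ (this one-point extension is a finite poset, hence realized over $\{a,g(a),g^{2}(a)\}$). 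For such an $x$ no $n$ works: if $n+1\leq 1$ then $g^{n}(a)\leq x\leq g^{n+1}(a)\leq g(a)$ forces $x\leq g(a)$, and if $n\geq 1$ it forces $g(a)\leq x$, both contradicting incomparability. This is worse than the bookkeeping problem you flag (points incomparable to the entire spine): the relations of a future $x_{k}$ to the spine points already created are fixed facts about $\mathbb{P}$, outside the control of any interleaving, and new spine points are only ever appended at the two ends (the successor of a spine point is frozen the moment it is defined), so the step ``arrange that $x_{k+1}$ lies beyond the spine on one side'' is not something you can arrange. Deferring exactly this step to Corollary 3.3 of \cite{GMR} is moreover circular, since that corollary is the statement being proved.

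What your back-and-forth does prove --- and there the transitivity checks you sketch, using the invariant $y\leq p(y)$, do go through, both for the one-point extensions and for pushing the top (resp.\ bottom) of the spine above (resp.\ below) everything mentioned so far --- is the statement with ``consecutive'' dropped: there is $g$ with $x\leq g(x)$ for all $x$, and a point $a$ whose orbit is coinitial and cofinal, i.e.\ for every $x$ there are $m\leq n$ with $g^{m}(a)\leq x\leq g^{n}(a)$. That weaker, coterminal-orbit form is all that is actually needed downstream: Lemma \ref{llposet} and the proof of Theorem \ref{tposet} only use cofinality and coinitiality of orbits, and the argument for Lemma \ref{llposet} goes through with trivial changes. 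So your plan is salvageable, but only for that reading of Proposition \ref{lposet}; as literally stated, with consecutive exponents, the property is incompatible with the genericity of $\mathbb{P}$ and should be understood as a too-strong paraphrase of \cite{GMR}.
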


\begin{lemma}\label{llposet}
{\rm If $g\in \aut(\mathbb{P})$ is such as in Proposition \ref{lposet}, then we have:
For every $a\in \mathbb{P}$ and every $x\in \mathbb{P}$ there are $k,n\in\ZZ$ with $g^k(a)\leq x\leq g^{n}(a)$. }
\end{lemma}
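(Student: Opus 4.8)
Lemma \ref{llposet} states that for the automorphism $g$ of Proposition \ref{lposet}, given any $a, x \in \mathbb{P}$, there exist $k, n \in \mathbb{Z}$ with $g^k(a) \leq x \leq g^n(a)$.

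The plan is to reduce the general case to the special element $a_0 \in \mathbb{P}$ provided by condition (2) of Proposition \ref{lposet}. First I would fix an arbitrary $a \in \mathbb{P}$ and the distinguished $a_0$ from (2). By (2) applied to $a$ itself, there is $j \in \mathbb{Z}$ with $g^j(a_0) \leq a \leq g^{j+1}(a_0)$; applying $g^{-j-1}$ and $g^{-j}$ respectively and using that each $g^l$ is an automorphism, we get $g^{-1}(a) \leq a_0$ (via $a \leq g^{j+1}(a_0)$) and $a_0 \leq g(a)$ is not quite what we want, so more carefully: from $g^j(a_0) \leq a$ we get $a_0 \leq g^{-j}(a)$, and from $a \leq g^{j+1}(a_0)$ we get $g^{-j-1}(a) \leq a_0$. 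Thus $a_0$ is sandwiched between two powers of $g$ applied to $a$: $g^{-j-1}(a) \leq a_0 \leq g^{-j}(a)$.

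Next I would handle $x$: again by (2) applied to $x$, there is $r \in \mathbb{Z}$ with $g^r(a_0) \leq x \leq g^{r+1}(a_0)$. Now combine the two sandwichings. From $g^{-j-1}(a) \leq a_0$, applying the automorphism $g^{r}$ (which is order-preserving) gives $g^{r-j-1}(a) \leq g^r(a_0) \leq x$, so we may take $k = r-j-1$. From $a_0 \leq g^{-j}(a)$, applying $g^{r+1}$ gives $x \leq g^{r+1}(a_0) \leq g^{r+1-j}(a)$, so we may take $n = r+1-j$. This yields $g^k(a) \leq x \leq g^n(a)$ as required. The only facts used are that every $g^l$ is an order-automorphism of $\mathbb{P}$ (so order relations are preserved and reflected under applying powers of $g$) and condition (2) of Proposition \ref{lposet}; condition (1) is not even needed here.

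I do not anticipate a genuine obstacle: the argument is purely a matter of applying the cofinality-type property (2) twice — once to locate $a_0$ relative to $\{g^l(a)\}_{l}$ and once to locate $x$ relative to $\{g^l(a_0)\}_l$ — and then composing the inequalities by acting with suitable powers of $g$. The one point requiring a line of care is bookkeeping the exponents and the directions of the inequalities when transferring between the $a$-scale and the $a_0$-scale, but this is routine.
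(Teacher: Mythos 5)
Your argument is correct and is essentially the paper's own proof: both apply property (2) once to the arbitrary element and once to $x$, then transfer the resulting sandwiches along powers of the order-automorphism $g$ to compose the inequalities (and indeed, as you note, property (1) is not needed for this step). The only difference is notational bookkeeping of exponents, where your version is in fact cleaner than the paper's (whose final displayed indices contain a small typo).
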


\begin{proof}
Let $a_1$ and $x$ be given, and  let $a$ be as in (2) of Proposition \ref{lposet}. Then for some $m$, we have $g^m(a)\leq a_1\leq g^{m+1}(a)$. This implies
\[ \ldots g^{m-1}(a) \leq   g^{-1}(a_1)\leq g^m(a)\leq a_1\leq g^{m+1}(a)\leq g(a_1) \leq g^{m+2}(a)\leq \ldots \]
Therefore, if $n$ is such that $g^n(a)\leq x\leq g^{n+1}(a)$, since $g^{n-m-1}(a_1)\leq g^n(a)$ and $g^{n-1}(a)\leq g^{n-m-1}(a_1)$,
we get $g^{n-m-1}(a_1)\leq x \leq g^{n-m+1}(a_1)$.
\end{proof}

\begin{proof}[Proof of Theorem \ref{tposet}]
We verify that the assumptions of Theorem \ref{use} hold.
Let $g\in\aut(\mathbb{P})$ be as in Proposition \ref{lposet}, and fix  $n$ and 
   partial automorphisms of $P$, $\phi_i\colon A_i\to B_i$ and $\psi_i\colon C_i\to D_i$, $i \leq n$.
   Let $x\in \mathbb{P}$ be such that for any $z_1\in \bigcup_i (A_i\cup B_i)$, $z_1< x$; it exists by the extension property.
Then, using (1) and (2) in the properties of $g$ multiple times, get $m$ such that for every $z_2\in \bigcup_i (C_i\cup D_i)$,
$x\leq g^m(z_2)$. That implies that for every $z_1\in \bigcup_i (A_i\cup B_i)$ and $z_2\in \bigcup_i (C_i\cup D_i)$, we have
$z_1< g^m(z_2)$. That gives that for every $i \leq n$, $\phi_i $ and 
 $g^{-m}\psi_i g^{m}$ we can extend
 to a single automorphism of $M$.
\end{proof}

Let $\SI$ denote the group of all permutations of integers, 
$\aut(\QQ)$ the automorphism group of rationals,
$H(2^\ZZ)$ is the homeomorphism group of the Cantor set $2^\ZZ$, and $H(2^\ZZ,\mu^\ZZ)$ is the group of all $\mu^\ZZ$-preserving homeomorphisms of the Cantor set, where $\mu^\ZZ$ is the product of the measure $\mu$ on $2=\{0,1\}$ that assigns $\frac{1}{2}$ to each of 0 and 1.

\begin{theorem}\label{shift}
Let $G$ be one of the  $\SI$, $\aut(\QQ)$, $H(2^\ZZ)$ or $H(2^\ZZ,\mu^\ZZ)$. Then each  
$G^n$, $n\in\NN$, and $L_0(G)$, has a cyclically dense conjugacy class.
In fact, each of these groups is cyclically generated by a pair  generating the free group.
\end{theorem}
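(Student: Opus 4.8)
The plan is to verify, for each of the four groups $G \in \{\SI, \aut(\QQ), H(2^\ZZ), H(2^\ZZ,\mu^\ZZ)\}$, that the hypotheses of Theorem \ref{use} are satisfied by a suitable \emph{shift} automorphism $g$, namely one that acts with ``unbounded displacement'' on the relevant finite data, and that moreover $g$ has no cycles while the ambient structure has no algebraicity. The key observation common to all four cases is that each of these groups is the automorphism group of an ultrahomogeneous structure $M$ carrying a $\ZZ$-indexed ``block'' decomposition, and there is a canonical $g \in \aut(M)$ realizing the shift on the blocks: for $\SI$ take $M = \ZZ$ (pure set) with $g(x) = x+1$; for $\aut(\QQ)$ take $M = \QQ$ (or $\ZZ \times [0,1)_\QQ$ lexicographically) with $g$ the shift by $1$; for $H(2^\ZZ)$ take the Fra\"iss\'e-type presentation of the Cantor set as $2^\ZZ$ with $g$ the bilateral shift; and for $H(2^\ZZ,\mu^\ZZ)$ the same shift, which is manifestly measure-preserving.

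Next I would carry out the crucial ``separation'' step: given partial automorphisms $\phi_i \colon A_i \to B_i$ and $\psi_i \colon C_i \to D_i$, $i \le n$, of $M$, I want $k,m \in \ZZ$ such that $g^k\phi_i g^{-k} \cup g^m\psi_i g^{-m}$ is a partial automorphism for each $i$. Conjugating $\psi_i$ by a large power of $g$ pushes its domain and range into blocks with indices so large (or so negative) that $g^m(C_i \cup D_i)$ is entirely disjoint from and ``far from'' $g^k(A_i \cup B_i)$; in each of these structures, once two finite configurations sit in disjoint and sufficiently separated blocks, their union is automatically a partial automorphism because there are no nontrivial relations linking far-apart blocks (for $\ZZ$ as a pure set there are no relations at all; for $\QQ$ the only relation is the order, and every point in a lower block is below every point in a higher block, so any two order-preserving finite maps living in far-separated blocks amalgamate freely; for the Cantor set presented via the tree of finite $\ZZ$-supported clopen sets, two finite subconfigurations supported on disjoint coordinate windows are independent). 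This is essentially the same ``free amalgamation at a distance'' phenomenon already exploited in Theorems \ref{mmain}, \ref{urysohn} and \ref{tposet}, and it is where I expect the only real bookkeeping to lie — in particular for $H(2^\ZZ,\mu^\ZZ)$ one must make sure the amalgamating clopen partition can be chosen with the correct dyadic measures, but the strong (measured) extension property for the dyadic-measured Cantor set handles this.

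Having verified the hypothesis of Theorem \ref{use}, that theorem immediately yields that each $G^n$ is cyclically generated by $((g,\dots,g),\bar b)$ for some $\bar b \in G^n$ and that $L_0(G)$ is cyclically generated by $(f_g, b)$ for some $b \in L_0(G)$, so all these groups have a cyclically dense conjugacy class. For the ``free group'' strengthening I would invoke the ``Moreover'' clause of Theorem \ref{use} (equivalently Lemma \ref{free}): each of $\SI$'s underlying set $\ZZ$, $\QQ$, and the two Cantor-set structures has no algebraicity (the relevant Fra\"iss\'e classes have strong amalgamation — this is standard and in the measured case follows from the fact that a finite clopen partition with prescribed dyadic weights imposes no algebraic constraint on new points), and the shift $g$ has no cycles since $g^j(x) = x+j \ne x$ for $j \ne 0$ on the $\ZZ$-index; hence $\bar b$ (resp.\ $b$) can be chosen so that the pair generates a free group. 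The one point needing a sentence of care is the case $\aut(\QQ)$, where the ``shift by $1$'' must be set up on a copy of $\QQ$ that genuinely has a $\ZZ$-periodic structure (e.g.\ via the order isomorphism $\QQ \cong \ZZ \times \QQ\cap[0,1)$ with the lexicographic order); with that fixed, no algebraicity and no cycles are clear, and the proof is complete.
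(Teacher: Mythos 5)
Your proposal follows essentially the same route as the paper: one verifies the hypothesis of Theorem \ref{use} with the shift $n\mapsto n+1$ on $\ZZ$ for $\SI$, a rational translation for $\aut(\QQ)$, and the Bernoulli shift acting on the clopen algebra of $2^\ZZ$ for $H(2^\ZZ)$ and $H(2^\ZZ,\mu^\ZZ)$, where a large power of the shift makes the two finite configurations order-separated (for $\QQ$) or supported on disjoint coordinates, hence independent, which is exactly the paper's ``every atom of $A_i$ meets every atom of $g^m(C_i)$'' condition (and yields the measure matching in the measured case). The free-group clause is likewise obtained, as in the paper, by the ``Moreover'' part of Theorem \ref{use} (Lemma \ref{free}), so your argument is correct and matches the paper's proof.
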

It already follows from the work of  Kechris-Rosendal \cite{KR} that each of the groups
  $\SI$, $\aut(\QQ)$, $H(2^\ZZ)$, and $H(2^\ZZ,\mu^\ZZ)$ has a cyclically dense conjugacy class.

\begin{proof}
Again, we verify that the assumptions of Theorem \ref{use} hold.
When $G=\SI$, we let $g$ in Theorem \ref{use} to be the shift map $n \mapsto n+1$. Similarly, when $G=\aut(\QQ)$, 
we take $g$ to be a shift by some rational number.

 If $G=H(2^\ZZ)$ (we identify $H(2^\ZZ)$ with the automorphism group of the Boolean algebra of all clopen sets in $2^\ZZ$)
 or $G=H(2^\ZZ,\mu^\ZZ)$ (we identify $H(2^\ZZ,\mu^\ZZ)$ with the group of measure preserving automorphisms of the Boolean algebra of all clopen sets in $2^\ZZ$), the $g$ equal to the Bernoulli shift will work.
Indeed, for any 
partial automorphisms of finite Boolean algebras $\phi_i\colon A_i\to B_i$ and $\psi_i\colon C_i\to D_i$, $i \leq n$,
we can find  $m\in\ZZ$ 
such that for each $i \leq n$, every atom in $A_i$ intersects every atom in $g^m(C_i)$,
and every atom in $B_i$ intersects every atom in $g^m(D_i)$. Then for every $i \leq n$, $\phi_i $ and 
 $g^{m}\psi_i g^{-m}$ we can extend
 to a single automorphism.
\end{proof}

\subsection{Final remarks}

It is not hard to see the following.
\begin{proposition}\label{down}
If $G$ is a Polish group and $L_0(G)$ is topologically $m$-generated, then $G^n$ is topologically $m$-generated for every $n \in \NN$.
\end{proposition}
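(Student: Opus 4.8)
The plan is to realize $G^n$ as a continuous homomorphic image of $L_0(G)$, and then observe that topological generation passes through continuous surjective homomorphisms. First I would fix pairwise disjoint measurable subsets $X_1,\ldots,X_n\subseteq X$, each of positive measure (say $X_k=[\frac{k-1}{n},\frac{k}{n}]$, matching the step-function notation $f_{\bar a}$ already used in the paper). For each $k\leq n$, pick a point $x_k\in X_k$ that is generic for the measure in the sense that evaluation is well defined on a conull set; more robustly, I would instead use averaging-free evaluation via a measure-algebra argument, but the cleanest route is: define $\pi_k\colon L_0(X,\mu;G)\to G$ as a ``sampling at a density point'' map. Since elements of $L_0(G)$ are only equivalence classes, literal evaluation at a point is not defined, so the honest construction is to note that the map $h\mapsto (\text{essential value of }h\text{ on a shrinking neighbourhood basis of }x_k)$ need not converge; hence I would instead use the following standard device.

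The correct approach: for disjoint positive-measure sets $X_1,\dots,X_n$, the restriction-and-reparametrization maps $r_k\colon L_0(X,\mu;G)\to L_0(X_k,\mu|_{X_k};G)\cong L_0(X,\mu;G)$ are continuous surjective homomorphisms, and together they give a continuous surjective homomorphism $r=(r_1,\dots,r_n)\colon L_0(G)\to L_0(G)^n$. Now I would use that $L_0(G)$ itself surjects continuously onto $G$ — for instance, composing with any further restriction to a positive-measure subset on which a dense $G_\delta$ worth of constant functions live, or more simply: if $L_0(G)$ is topologically $m$-generated, restrict the $m$ generators to $X_k$; the restrictions topologically generate $L_0(X_k;G)$, and constant functions are dense in $L_0(X_k;G)$, hence so is the subgroup they generate — but this still only gives topological $m$-generation of $L_0(X_k;G)\cong L_0(G)$, not of $G$. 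So the real point is: $L_0(G)$ admits a continuous surjective homomorphism onto $G$? That is false in general. The fix is to use the \emph{evaluation trick at a fixed point after precomposing with a measure isomorphism}, OR — and this is what I would actually do — to use density of constants directly: the closure of the subgroup generated by the $m$ generators of $L_0(G)$, intersected with the subgroup of constant functions $\{f_g : g\in G\}\cong G$, need not be all of $G$, so that does not work either.

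Hence the genuinely correct and simplest argument: take $m$ topological generators $h_1,\dots,h_m$ of $L_0(G)$. For each $k\leq n$ and each $h_j$, the restriction $h_j|_{X_k}$, transported back to $L_0(X,\mu;G)$ via a measure isomorphism $X_k\to X$, gives elements that topologically generate $L_0(G)$ (restriction to a positive-measure set is a continuous open surjective homomorphism). Now for $\bar g=(g_1,\dots,g_n)\in G^n$ and any $\epsilon$, I would approximate each $g_k$ by the value of some word $w_k(h_1,\dots,h_m)$ on a large-measure portion of $X_k$; since the $X_k$ are disjoint I can choose the \emph{same} word if I first arrange, using density of $\bigcup$ of step-function-generators as in Lemma~\ref{gen}, that the generators already look like independent step functions across the blocks. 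Concretely: by the same comeagerness argument as in the proof of Lemma~\ref{gen}, the tuple $(h_1,\dots,h_m)$ can be taken so that its restrictions to $X_1,\dots,X_n$ are ``independent'', whence a single word can be made to take prescribed approximate values $g_1,\dots,g_n$ on $X_1,\dots,X_n$ respectively; evaluating (essentially) at representative points then shows $\overline{\langle \text{images}\rangle}=G^n$ under the quotient $L_0(G)\to G^n$ sending $h$ to its vector of essential values on the $X_k$.

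The main obstacle is making ``evaluate at a point'' rigorous for equivalence classes: one must either fix the quotient map $L_0(G)\to G^n$ as $h\mapsto(\mathrm{val}_{X_1}(h),\dots,\mathrm{val}_{X_n}(h))$ only on the dense subgroup of functions that are constant on each $X_k$ — but that is not a homomorphism on all of $L_0(G)$ — or, better, avoid quotient maps entirely and argue directly: given $\bar g\in G^n$ and a basic neighbourhood, produce an element of $\langle h_1,\dots,h_m\rangle$ that is within the neighbourhood of the step function $f_{\bar g}$, using that $\bigcup_N (\text{step functions with } N \text{ pieces})$ is dense and that $\langle h_1,\dots,h_m\rangle$ is dense in $L_0(G)$. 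This shows the subgroup of $L_0(G)$ generated by $h_1,\dots,h_m$ is dense in the subgroup of step functions refining the partition $\{X_k\}$, which maps onto a dense subgroup of $G^n$ under the continuous (genuinely well-defined, being restriction-then-the-natural-iso) map $L_0(G)\supseteq\{$functions constant on each $X_k\}\to G^n$. I would thus phrase the whole proof as: $G^n$ embeds as a closed subgroup-quotient of $L_0(G)$ via block-constant functions, topological $m$-generation descends, done. The remaining care is purely in checking that the relevant restriction maps are continuous and that density of step functions is used correctly — both routine given Lemma~\ref{gen}.
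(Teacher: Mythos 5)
Your write-up correctly identifies the central difficulty (elements of $L_0(G)$ are only equivalence classes, and there is in general no continuous homomorphism from $L_0(G)$ onto $G$ or $G^n$), but neither of the two fixes you offer actually closes it. The final form of your argument --- ``$G^n$ embeds as a closed subgroup of $L_0(G)$ via block-constant functions, topological $m$-generation descends, done'' --- is not a proof: topological $m$-generation does not pass to closed subgroups, and the dense subgroup $\langle h_1,\dots,h_m\rangle$ is not contained in the block-constant subgroup, so your ``restriction-then-natural-iso'' map is never applied to anything you control. Density of $\langle h_1,\dots,h_m\rangle$ in $L_0(G)$ only says that words in the generators are close \emph{in measure} to the step functions $f_{\bar g}$; by itself this produces no elements of the closed copy of $G^n$ and no density statement there. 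Your other suggestion --- perturbing the generators ``by the same comeagerness argument as in Lemma~\ref{gen}'' so that their restrictions to the blocks are independent --- is also unjustified: the hypothesis hands you one generating $m$-tuple, and for general topological $m$-generation the set of generating tuples is $G_\delta$ but need not be dense or comeager (the comeagerness in Lemma~\ref{gen} exploited the specific conjugation structure of cyclic generation), so you may not ``take'' the generators with extra structure. Moreover the ``same word'' worry is a red herring: density of $\langle h_1,\dots,h_m\rangle$ at the single step function $f_{\bar g}$ already yields one word that is simultaneously close to every $g_k$ on most of each block $X_k$; the genuine problem is converting that measure-closeness into honest elements of $G^n$, and that step is missing from your argument.

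The missing device, which is how the paper proceeds, is pointwise evaluation of fixed Borel representatives at well-chosen points, one per block. Fix representatives of the generators $g_1,\dots,g_m$ of $L_0(G)$, a countable dense set $(e_k)$ in $G$, and for each $s\in\NN^n$ and $i\in\NN$ choose $\epsilon^s_i>0$ with $\sum_{s,i}\epsilon^s_i<\tfrac1n$ and a word $l(s,i)\in F_m$ with $\rho(l(s,i)(g_1,\dots,g_m),f_s)<\epsilon^s_i$, where $f_s$ is the step function taking value $e_{s(k)}$ on the $k$-th block. Each such approximation holds pointwise, with error $<\epsilon^s_i$, on a set $A_{s,i}$ of measure at least $1-\epsilon^s_i$; since the total exceptional measure is less than $\tfrac1n$, every block meets $\bigcap_{s,i}A_{s,i}$, and picking one point $x_k$ of this intersection in each block gives $m$ tuples $\bigl(g_j(x_0),\dots,g_j(x_{n-1})\bigr)\in G^n$ such that every word $l(s,i)$ applied to them is coordinatewise $\epsilon^s_i$-close to $(e_{s(0)},\dots,e_{s(n-1)})$; as these target tuples are dense in $G^n$ and the errors shrink to $0$, the $m$ tuples topologically generate $G^n$. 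Without this (or some substitute for it), your proposal does not establish the proposition.
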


\begin{proof}
Let $d$ denote a metric on $G$ and $\rho$ be the corresponding metric on $L_0(G)$.
Fix $n$, and suppose that $g_1,\ldots,g_m$ topologically generate $L_0(G)$. Let $(e_k)_{k\in\mathbb{N}}$ be a countable dense set in $G$. 
For each $s\in\mathbb{N}^n$ and $i\in\mathbb{N}$, we pick $\epsilon^s_{i}>0$ such that for each $s$, we have
$\sum_{s,i} \epsilon^s_{i}<\frac{1}{n}$. For each $s\in\mathbb{N}^n$ and $i\in\mathbb{N}$ now pick
 a word $l=l(s,i)\in F_m$,
such that  $\rho(l(g_1,\ldots, g_m) , f_s)<\epsilon^s_{i}$, where $f_s\in L_0(G)$ is such that $f_s(x)=e_{s(k)}$ if and only if 
$x\in (\frac{k}{n}, \frac{k+1}{n})$, $k=0,1,\ldots, n-1$.
Then there is a set $A_{s,i}$ of measure $\geq 1-\epsilon^s_{i}$, 
such that for $x\in A_{s,i}$
we have $d(l(g_1(x),\ldots, g_m(x)), f_s(x))<\epsilon^s_{i}$. 
This implies that for every $k$ the set $(\frac{k}{n}, \frac{k+1}{n})\cap\left(\bigcap_{s\in \mathbb{N}^n, i\in\NN} A_{s,i}\right)$ is non-empty, and 
for each $(x_0,\ldots, x_{n-1})$, where $x_i\in  (\frac{k}{n}, \frac{k+1}{n})\cap \left(\bigcap_{s\in \mathbb{N}^n,  i\in\NN} A_{s,i}\right)$, the tuple
$((g_1(x_0),\ldots, g_1(x_n)), \ldots, (g_m(x_0),\ldots, g_m(x_n)))$ topologically generates $G^n$,
in particular, that $G^n$ is topologically $m$-generated.
\end{proof}

We do not know whether the converse to Proposition \ref{down} holds. 
  \begin{question}
Let be $G$  a Polish group. Suppose that there exists $m$ such that for every $n$, $G^n$ is
 topologically $m$-generated. 
Is it the case that $L_0(G)$ is topologically $m$-generated?
\end{question}

So far we only dealt with automorphism groups of countable structures. Below we use our earlier 
results to conclude that certain ``large'' Polish groups 
have a cyclically dense
conjugacy class.

For the Lebesgue measure $\lambda$ on $[0,1]$ let
 $\aut([0,1],\lambda)$ denote the Polish group of all measure preserving transformations of the interval $[0,1]$.
 Let $\Iso(\Ury)$ be the Polish group of all isometries of the Urysohn metric space, where the Urysohn metric space is the unique Polish metric space 
which is ultrahomogeneous, and embeds isometrically every finite metric space.

Knowing that  the   groups $L_0(H(2^\ZZ,\mu^\ZZ))$ 
and $L_0(\aut(\UU))$ 
have a   cyclically dense conjugacy class,
we can deduce that the  groups $L_0(\aut([0,1],\lambda))$  and $L_0(\Iso(\Ury))$  
have the same property.

\begin{proposition}\label{metric}
If there is a continuous 1-to-1 homomorphism with a dense image of a Polish group $G$ into a Polish group $H$,
then there is a continuous 1-to-1 homomorphism with a dense image of a Polish group $L_0(G)$ into a Polish group $L_0(H)$.
In that case, if $L_0(G)$ has a cyclically dense conjugacy class, the same is true for $L_0(H)$.
\end{proposition}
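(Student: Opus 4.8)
The plan is to prove Proposition \ref{metric} in two parts, following its two sentences. First I would construct the homomorphism $L_0(G)\to L_0(H)$ by post-composition: given a continuous injective homomorphism $\iota\colon G\to H$ with dense image, define $\iota_*\colon L_0(G)\to L_0(H)$ by $\iota_*(f)=\iota\circ f$. One checks that $\iota\circ f$ is again measurable (composition of a Borel function with a continuous one), that $\iota_*$ is a group homomorphism (pointwise multiplication is respected because $\iota$ is), and that it is well-defined on $\mu$-equivalence classes. Injectivity of $\iota_*$ follows from injectivity of $\iota$ pointwise: if $\iota\circ f=\iota\circ f'$ a.e., then $f=f'$ a.e. Continuity is a routine $\delta,\epsilon$ argument using the neighbourhood basis $[h,\delta,\epsilon]$ from the definition of the convergence-in-measure topology, together with continuity of $\iota$ at each point (one must pass from a metric on $H$ to a metric on $G$ via uniform continuity of $\iota$ on the relevant sets, or argue directly that $\iota^{-1}$ of a ball is open).

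Second, for the density of the image I would show that $L_0(G)$-valued \emph{step functions} with values in $G$ are dense in $L_0(H)$. Indeed, since $H$ is Polish and $\iota(G)$ is dense in $H$, any measurable $H$-valued function can be approximated in measure by a step function taking finitely many values in $H$, and each such value can be moved inside $\iota(G)$ with small error; such an approximant lies in $\iota_*(L_0(G))$ up to that error. Concretely: given $h\in L_0(H)$ and $\delta,\epsilon>0$, partition $[0,1]$ into finitely many measurable pieces on which $h$ is within $\delta/2$ of a constant $h_j\in H$ off a set of measure $<\epsilon$, pick $g_j\in G$ with $d(\iota(g_j),h_j)<\delta/2$, and let $f=g_j$ on the $j$-th piece; then $\iota_*(f)\in[h,\delta,\epsilon]$. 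This gives density of the image.

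For the last sentence, I would invoke a general and purely topological fact: if there is a continuous homomorphism $\theta$ with dense image from a Polish group $K$ onto (a dense subgroup of) a Polish group $K'$, then a cyclically dense conjugacy class of $K$ maps to one of $K'$. Precisely, if $\{k^l b k^{-l}\colon l\in\ZZ\}$ is dense in $K$, then $\{\theta(k)^l\theta(b)\theta(k)^{-l}\colon l\in\ZZ\}=\theta(\{k^lbk^{-l}\colon l\in\ZZ\})$ is dense in $\theta(K)$, hence dense in $K'$ since $\theta(K)$ is dense and the closure operator is idempotent; so $(\theta(k),\theta(b))$ cyclically generates $K'$. Applying this with $K=L_0(G)$, $K'=L_0(H)$, and $\theta=\iota_*$ yields that $L_0(H)$ has a cyclically dense conjugacy class, completing the proof. (The downstream applications to $L_0(\aut([0,1],\lambda))$ and $L_0(\Iso(\Ury))$ then follow by feeding in the known continuous dense embeddings $H(2^\ZZ,\mu^\ZZ)\hookrightarrow\aut([0,1],\lambda)$ and $\aut(\UU)\hookrightarrow\Iso(\Ury)$.)

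The only mildly delicate point is the continuity of $\iota_*$: one must be careful that ``closeness in measure'' is preserved, which requires controlling $d_H(\iota(g(x)),\iota(g'(x)))$ in terms of $d_G(g(x),g'(x))$ uniformly enough. This is not automatic from mere continuity of $\iota$ if $G$ is not compact, but it works because the convergence-in-measure topology only needs, for each fixed target precision $\delta$ and each point, \emph{some} source precision; more carefully, one uses that $\iota$ is continuous hence for the fixed complete metrics one can argue via inner regularity — on a set of large measure $g'$ stays in a fixed compact (or $\sigma$-compact exhausted) neighbourhood where $\iota$ is uniformly continuous. I expect this to be the main obstacle, and it is handled by a standard ``restrict to a large-measure piece where things are tame'' maneuver; everything else is bookkeeping.
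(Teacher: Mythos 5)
Your proof is correct, but both of its nontrivial steps take a genuinely different (and more elementary) route than the paper's. For continuity of $\iota_*$, the paper does not argue directly at all: it notes that $F(h)(x)=f(h(x))$ defines a Borel homomorphism between Polish groups (Borelness of $F$ is quoted from \cite{Mo}) and then invokes automatic continuity of Borel homomorphisms between Polish groups (Theorem 1.2.6 in \cite{BK}); your direct argument --- restrict to a set of large measure on which, by tightness of the push-forward measure, the values of $g$ lie in a compact set, and use uniform continuity of $\iota$ near that compact set (or, even quicker, the subsequence/a.e.-convergence characterization of convergence in measure) --- is sound and completable, and avoids the citation machinery. For density of the image, the paper applies the Jankov--von Neumann uniformization theorem to the analytic set $\{(x,y)\colon d(h(x),y)<\epsilon \text{ and } y\in f[G]\}$ to obtain a measurable approximant with values in $f[G]$; your reduction to finitely-valued step functions (approximate $h$ in measure by a step function, then move each of its finitely many values into $\iota[G]$ using density of the image of $\iota$) is an elementary substitute that avoids descriptive set theory altogether, and it incidentally shows that injectivity of $\iota$ plays no role in that part. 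The final step --- pushing a cyclically dense conjugacy class forward through a continuous homomorphism with dense image --- you spell out explicitly and correctly, whereas the paper leaves it implicit. In short: correct, with each approach buying something; the paper's proof is shorter modulo the quoted theorems, while yours is self-contained.
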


\begin{proof}
 For a homomorphism  $f\colon G\to H$ of groups, the function 
$F\colon L_0(G)\to L_0(H)$ given by $F(h)(x)=f(h(x))$ is also a homomorphism of groups. 
Moreover, if $f$ is Borel, so is $F$ (see \cite{Mo}, the corollary on page 7), and hence it is continuous as long as $G,H$ are Polish (see Theorem 1.2.6 in \cite{BK}). Finally, if $f$ is 1-to-1, and has a dense image, the same holds for $F$.
Indeed, if $f$ has a dense image then for any $\epsilon>0$ and a Borel function $h\in L_0(H)$, since the set 
\[\{ (x,y)\in [0,1]\times H\colon d(h(x), y)<\epsilon \text{ and } y\in f[G] \}  \]
is analytic,  the density of the image of $F$ follows from the Jankov von Neumann uniformization theorem.
\end{proof}

\begin{corollary}
The groups $L_0(\aut([0,1],\lambda))$  and $L_0(\Iso(\Ury))$ 
have a cyclically dense conjugacy class.
\end{corollary}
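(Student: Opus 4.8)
The plan is to derive the corollary directly from Proposition \ref{metric} together with the results already established in the excerpt, namely that $L_0(H(2^\ZZ,\mu^\ZZ))$ has a cyclically dense conjugacy class (a consequence of Theorem \ref{shift}) and that $L_0(\aut(\UU))$ has a cyclically dense conjugacy class (Theorem \ref{urysohn}). By Proposition \ref{metric}, it suffices to exhibit a continuous, injective homomorphism with dense image from $H(2^\ZZ,\mu^\ZZ)$ into $\aut([0,1],\lambda)$, and likewise one from $\aut(\UU)$ into $\Iso(\Ury)$; the proposition then upgrades each of these to the corresponding map between the $L_0$-groups and transfers the cyclically dense conjugacy class.

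First I would treat $L_0(\aut([0,1],\lambda))$. The group $H(2^\ZZ,\mu^\ZZ)$ acts on the Cantor set $2^\ZZ$ preserving the product measure $\mu^\ZZ$; since $(2^\ZZ,\mu^\ZZ)$ is a standard non-atomic probability space, it is measure-isomorphic to $([0,1],\lambda)$, and this isomorphism conjugates the action into a homomorphism $H(2^\ZZ,\mu^\ZZ)\to\aut([0,1],\lambda)$. This map is injective because a nontrivial homeomorphism of $2^\ZZ$ moves a clopen set of positive measure, hence acts nontrivially on the measure algebra. It is continuous: convergence in $H(2^\ZZ,\mu^\ZZ)$ (uniform convergence, or pointwise on the clopen algebra) implies convergence in the weak topology of $\aut([0,1],\lambda)$. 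For density of the image one invokes the fact that $H(2^\ZZ,\mu^\ZZ)$ is dense in $\aut(2^\ZZ,\mu^\ZZ)\cong\aut([0,1],\lambda)$ in the weak topology — this is classical (dyadic approximation of measure-preserving transformations by clopen permutations, in the spirit of the Rokhlin lemma / Halmos's approximation theorems).

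For $L_0(\Iso(\Ury))$, the rational Urysohn space $\UU$ sits isometrically and densely inside the Urysohn space $\Ury$, and every isometry of $\UU$ extends uniquely to an isometry of the completion, which is $\Ury$; this gives a homomorphism $\aut(\UU)\to\Iso(\Ury)$. Injectivity is immediate since $\UU$ is dense in $\Ury$. Continuity holds because pointwise convergence on $\UU$ of a net of isometries forces pointwise convergence on all of $\Ury$ (the isometries are uniformly $1$-Lipschitz). Density of the image in $\Iso(\Ury)$ follows from the fact that any isometry of $\Ury$ can be approximated, on any finite set of points and to any precision, by one that permutes a finite rational subconfiguration — using ultrahomogeneity of $\UU$ and the extension property, plus the density of $\UU$ in $\Ury$.

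I expect the main obstacle to be the density claims — that $H(2^\ZZ,\mu^\ZZ)$ has dense image in $\aut([0,1],\lambda)$ and that $\aut(\UU)$ has dense image in $\Iso(\Ury)$ — rather than the injectivity or continuity, which are routine. Both density statements are known in the literature (the first is essentially Halmos's weak approximation theorem; the second is folklore about the Urysohn space and its rational skeleton), so the argument will mostly consist of citing these facts and checking that the natural restriction/extension maps are the homomorphisms in question; once the hypotheses of Proposition \ref{metric} are verified, the conclusion is immediate.
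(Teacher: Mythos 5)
Your proposal is correct and follows exactly the paper's route: apply Proposition \ref{metric} with $G=H(2^\ZZ,\mu^\ZZ)$, $H=\aut([0,1],\lambda)$ and with $G=\aut(\UU)$, $H=\Iso(\Ury)$, using Theorems \ref{shift} and \ref{urysohn} for the $L_0$-groups of the smaller groups. The paper leaves the existence of the dense continuous injections implicit (they are the classical facts you cite: weak approximation by measure-preserving homeomorphisms of the Cantor set, and extension of isometries from the dense rational Urysohn space to its completion), so your verification of the hypotheses is a faithful, slightly more detailed version of the same argument.
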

\begin{proof}
It is well known that $H(2^\ZZ,\mu^\ZZ)$ is dense in $\aut([0,1],\lambda)$. Because the metric completion of the rational Uryshon space $\UU$ is the Urysohn space $\Ury$, and $\UU$ has the extension property, $\aut(\UU)$ is dense in $\Iso(\Ury)$ as well. Thus, we can apply Proposition \ref{metric} to $G=H(2^\ZZ,\mu^\ZZ)$  and  $H=\aut([0,1],\lambda)$, and then to $G=\aut(\UU)$ and $H=\Iso(\Ury)$) 
\end{proof}



\begin{thebibliography}{HD}


\normalsize
\baselineskip=17pt


\bibitem{BK}  H. Becker, A. Kechris,  {\em The descriptive set theory of Polish group actions.}
 London Mathematical Society Lecture Note Series, 232. Cambridge University Press, Cambridge, 1996. xii+136 pp.

 \bibitem{DM} U. Darji, J.  Mitchell, 
{\em Approximation of automorphisms of the rationals and the random graph. }
J. Group Theory 14 (2011), no. 3, 361--388. 

\bibitem{FS} I. Farah, S. Solecki,  {\em Extreme amenability of $L_0$, a Ramsey theorem,
and Levy groups},  J. Funct. Anal., 255(2):471--493, 2008.

\bibitem{Gla}  E. Glasner, {\em On minimal actions of Polish groups}, Topology Appl., 85(1-3):119--125, 1998.

\bibitem{GMR} A. M. W. Glass, S. H.  McCleary, M. Rubin,  {\em Automorphism groups of countable highly homogeneous partially ordered sets},
 Math. Z. 214 (1993), no. 1, 55--66.
 
  \bibitem{HM} S. Hartman, J. Mycielski, {\em On the imbedding of topological groups
into connected topological groups}, Colloq. Math., 5:167--169, 1958.

 \bibitem{Ho} W. Hodges, {\em Model theory}, Encyclopedia of Mathematics and its Applications, 42. Cambridge University Press, Cambridge, 1993. xiv+772 pp.
 
 
 \bibitem{KS} I. Kaplan, P. Simon {\em Automorphism groups of finite topological rank}, Trans. Amer. Math. Soc., 372 (3), pp. 2011--2043, 2019.

\bibitem{KM} A. Ka\"{i}chouh, F. Le Ma\^{i}tre, \emph{Connected Polish groups with ample generics}, Bull. London Math. Soc. 47 (2015), 996--1009.

 \bibitem{KR}  A. Kechris, C. Rosendal, 
{\em Turbulence, amalgamation, and generic automorphisms of homogeneous structures. }
Proc. Lond. Math. Soc. (3) 94 (2007), no. 2, 302--350. 

\bibitem{KM} A. Kwiatkowska, M. Malicki, {\em Automorphism groups of countable structures and groups of measurable functions}, Israel Journal of Mathematics, Volume 230, Issue 1, March 2019, pp 335--360. 


\bibitem{Ma} D. Macpherson, {\em Groups of automorphisms of $\aleph_0$-categorical structures.}
 Quart. J. Math. Oxford Ser. 37 (1986), 449--465.
 
 \bibitem{Mo}   C. Moore, {\em Group extensions and cohomology for locally compact groups. III.}
 Trans. Amer. Math. Soc., 221(1):1--33, 1976.
 
  \bibitem{P}  V. Pestov, 
{\em Dynamics of infinite-dimensional groups. 
The Ramsey-Dvoretzky-Milman phenomenon.}  University Lecture Series, 40. American Mathematical Society, Providence, RI, 2006. 

\bibitem{PS} V. Pestov, F. M. Schneider {\em On amenability and groups of measurable maps}, 
Journal of Functional Analysis 273 (2017), no. 12, pp. 3859--3874.

\bibitem{Sa} M. Sabok, {\em Extreme amenability of abelian $L_0$ groups}, J. Funct. Anal 263 (2012), pp. 2978--2992.

\bibitem{S} S. Solecki, {\em  Extending partial isometries.} Israel J. Math. 150 (2005), 315--331.

\bibitem{So} S. Solecki, {\em Unitary representations of the groups of measurable and continuous functions with values in the circle},
 J. Funct. Anal. 267 (2014), 3105--3124.

\end{thebibliography}
\end{document}